\newtheorem{theo}{Theorem}[section]
\newtheorem{coro}[theo]{Corollary}
\newtheorem{lemm}[theo]{Lemma}
\newtheorem{prop}[theo]{Proposition}
\newtheorem{rema}[theo]{Remark}
\renewcommand{\Re}{\operatorname{Re}}
\renewcommand{\Im}{\operatorname{Im}}
\numberwithin{equation}{section}
\begin{document}

\title[Quasi-periodic response solutions of nonlinear plate models]{
Quasi-periodic response solutions of nonlinear plate models with nonlocal energy
  damping}

\author{Bochao Chen}
\address{School of
Mathematics and Statistics, Center for Mathematics and
Interdisciplinary Sciences, Northeast Normal University, Changchun, Jilin 130024, P.R.China. }
\email{chenbc758@nenu.edu.cn}

\author{Yixian Gao}
\address{School of
Mathematics and Statistics, Center for Mathematics and
Interdisciplinary Sciences, Northeast Normal University, Changchun, Jilin 130024, P.R.China. }
\email{gaoyx643@nenu.edu.cn}

\author{Zhaosheng Feng}
\address{School of Mathematical and Statistical Sciences, University of Texas-RGV, Edinburg, TX 78539, USA
 }
\email{
zhaosheng.feng@utrgv.edu}

\author{Huiying  Liu}
\address{School of Mathematics and Statistics, Center for Mathematics and
Interdisciplinary Sciences, Northeast Normal University, Changchun, Jilin 130024, P.R.China. }
\email{hyliu@nenu.edu.cn}

\thanks{The research of BC was supported in part by  NSFC grants (project number, 12471181)  and the Fundamental Research Funds for the Central Universities (project numbers, 2412022QD032 and 2412023YQ003) The research of YG was supported in part by NSFC grants (project number, 12371187)  and the Science and Technology Development Plan Project of Jilin Province
20240101006JJ. The research of ZF was  partially supported by NSF-DMS 2316952.}

\subjclass[2010]{37K55, 35Q40}

\keywords{Beam equations, Response solutions, Nonlocal energy
  damping, Nash--Moser iteration}

\begin{abstract}
Response solutions are quasi-periodic ones with the same frequency as the forcing term. The present work is devoted to constructing  response solutions for $d$-dimensional nonlinear plate models with nonlocal energy
  damping, which are closely related to damping phenomena in flight structures. 
  For such  models, the main characteristic is that  the dissipation rate depends on the energy strength. By considering a small parameter $\epsilon$  in the domain excluding the origin  and imposing a small quasi-periodic forcing with a  Diophantine frequency vector, we demonstrate the persistence of the corresponding response solution.
We provide  an alternative approach to the contraction mapping principle (cf. \cite{Calleja2017Response,Wang2020Response}) through a combination of reduction  together with  the
Nash--Moser iteration technique. The reason behind this approach lies in the derivative losses caused by the  nonlocal nonlinearity.
\end{abstract}

\maketitle

\section{Introduction}

\subsection*{Problem formulation and a related literature overview}

The present work is devoted to the study of the vibrations of a nonlinear plate model associated with the instantaneous energy
\begin{align*}
\lambda\int_{\mathbb{T}^d}|\Delta u|^2 \mathrm{d}x+\int_{\mathbb{T}^d}|u_t|^2 \mathrm{d}x
\end{align*}
with $\lambda>0$ being a structural positive constant, which  is affected by the displacement $u$ and the velocity $u_t$ (see \cite{Trvares2024Dynamics}). For an integer $d\geq1$,  denote by  $\mathbb{T}^d$  the $d$-dimensional torus. Let $\Delta=\sum^d_{j=1}\partial^2_{x_j}$ represent the Laplace operator. More precisely, we consider the following plate model with nonlocal nonlinear damping and quasi-periodic external forcing:
\begin{align}\label{kirchhoff}
 u_{tt}+\lambda\Delta^2 u-\mu\Delta u+\epsilon u_t+\alpha\Big(\lambda\int_{\mathbb{T}^d}|\Delta u|^2 \mathrm{d}x+\int_{\mathbb{T}^d}|u_t|^2 \mathrm{d}x\Big) u_t=\epsilon^2 g(\omega t,x)
\end{align}
for $(t,x)\in\mathbb{R}\times\mathbb{T}^d$.  Here,  $\mu,\alpha$ are positive constants, $0<\epsilon\ll1$ is a sufficiently small parameter, $\omega=(\omega_1,\cdots,\omega_\nu)\in\mathbb{R}^{\nu}$ is a $\nu$-dimensional forcing frequency vector for some integer $\nu>1$, and $g:\mathbb{T}^\nu\times\mathbb{T}^d\rightarrow\mathbb{R}$ denotes an external forcing term  with zero average, namely,
\begin{align*}
\int_{\mathbb{T}^{\nu+d}}g(\varphi,x)\mathrm{d}\varphi\mathrm{d}x=0.
\end{align*}
From a mathematical perspective, Tavares et al. \cite{Trvares2024Dynamics} 
investigated the long-time dynamics including the existence of global attractors and established its quasi-stability.
 Due to the influence of the strength of the energy on the dissipation rate and the wide application of this model, such as in aeroelasticity (see NASA--Air Force reports \cite{balakrishnan1988theory,balakrishnan1989distributed}), this model has attracted significant interest. Other related  models, where the  damping term $u_t$ is replaced by $u_{xxt}$ in nonlinear nonlocal damping, were originally proposed by Balakrishnan and Taylor on flight structures in the one dimensional case; see  \cite{balakrishnan1989distributed}. For further references on extensible beams with nonlocal nonlinear damping of  Balakrishnan--Taylor type, we can refer  to  \cite{jorge2019beam} and the references included in it.

Additionally, a class of new models related to beam equations with nonlocal nonlinear damping has attracted significant recent interest.  For example,
this type of model with a nonlocal nonlinearity of Kirchhoff type was  proposed by Lange and Perla Menzala in \cite{Lange1997Rates}, which are closely connected to nonlinear Schr\"{o}dinger equations with time-dependent dissipation.
They derived the global existence of classical solutions by imposing suitable initial conditions. Since then, many works associated with other partial differential equations (PDEs) with nonlocal nonlinear damping have been presented in literature.  Cavalcanti et al. \cite{cavalcanti2004exponential}  established the global existence and uniform decay rates of solutions to viscoelastic beam equations. Moreover, the authors \cite{Cavalcanti2017Exponential} investigated both the well-posedness and the exponential stability of  solutions for  wave equations. Pucci and Saldi \cite{Pucci2017Asymptotic} investigated the asymptotic stability of solutions for fractional $p$-Laplacian equations. Additionally, there has been growing interest in the exploration of nonlinear beam equations; see \cite{Chen2018Quasi, Chen2019Periodic, Yang2013Extensible, Eliasson2016KAM, ji2023periodic, ge2021KAM}.

Motivated by the aforementioned physical and mathematical studies, the present paper focuses on the existence of  response solutions for beam equations with nonlocal nonlinear damping under Diophantine frequency conditions. A solution of equation \eqref{kirchhoff} is  referred to as a response solution if it is quasi-periodic with the same frequency vector $\omega$ as that of the forcing function $g$ (cf. \cite{Calleja2017Response}), which signifies the simplest harmonic responses to the external forcing.

\subsection*{Other related literature overview}

Numerous studies have addressed the existence of response solutions in various contexts. The research on response solutions was initially focused on ordinary differential equations (ODEs), i.e.,  quasi-periodic forced harmonic oscillator systems with positive damping coefficients. Stoker \cite{Stoker1992nonlinear} first formulated this problem and established the proof of existence, while simultaneously raising the question of whether such solutions persist as damping coefficients approach zero.  Subsequently, Moser \cite{Moser1965combination} answered this question in certain cases for sufficiently small damping coefficients; see also the similar conclusions obtained by Braaksma and Broer \cite{Braaksma1987quasiperiodic} together with Friedman \cite{Friedman1967quasi}. Recently, Si et al. showed that the response solution persists in the cases with sufficiently large damping coefficients. We can also refer to \cite{lou2017quasi,xu2020stoker,ma2023response} for other studies related to the existence of response solutions in the case where damping coefficients vanish. One of the challenges we face when discussing the problem related to the preservation of response solutions  in quasi-periodically forced systems is the so-called small denominator problems. Hence, the proof of the above results relies on Kolmogorov--Arnold--Moser (KAM) theory if the frequency vector is of Diophantine or Liouvillean type. Moreover, Calleja et al. \cite{Calleja2013Construction} and  Gentile and Vaia \cite{gentile2021response} found  alternative approaches to KAM theory using the contraction mapping principle and the properties of continued fractions, respectively.

Before introducing some related work on response solutions to PDEs, we have to refer to the pioneering bifurcation result given by Rabinowitz \cite{Rabinowitz1967periodic,Rabinowitz1968periodic}, for fully nonlinear forced wave equations with small damping. The main challenges encountered in the study of response solutions to PDEs are inconsistent with ODEs. Unlike ODEs, there are two main approaches to deal with response solutions to Hamiltonian PDEs. The first approach involves applying the infinite-dimensional KAM theory; see \cite{cheng2020response,sun2018quasi}. The other one, known as the  Craig--Wayne--Bourgain (CWB) method, is founded on the Lyapunov--Schmidt procedure as well as the Nash--Moser theorem; see \cite{Bourgain1994construction,Baldi2008Forced,craig1993Newton}. Additionally, these studies related
to response solutions to PDEs with the Kirchhoff term could be found in \cite{corsi2018Quasi,Montalto2017quasi}. Recent years have seen growing interest in response solutions for damped PDEs.  Concerning PDEs subject to very strong damping, Calleja et al. \cite{Calleja2017Response} were inspired by  the  procedure  adopted  in \cite{Calleja2013Construction} and established the  existence of response solutions for wave equations. Correspondingly,  Wang and de la Llave \cite{Wang2020Response} presented  such solutions for ill-posed Boussinesq equations indeed exist.

\subsection*{Mathematical challenge}

The primary objective of this work is to investigate the persistence of response solutions to equation \eqref{kirchhoff}. Before addressing the technical challenges, we first examine the limitations of the methods developed  \cite{Calleja2017Response,Wang2020Response}.
 In the analytic or differentiable  case, the authors transformed the original equation into the fixed point equation of the form $v_\epsilon=\mathcal{T}_{\epsilon}(v_\epsilon)$, considered $\epsilon$
in a domain excluding  the origin $\epsilon=0$, and employed the contraction mapping principle since the corresponding operator $\mathcal{T}_{\epsilon}$ was just from a certain function space to itself. However, the techniques and methods  developed  in \cite{Calleja2017Response, Wang2020Response} can not be directly applied to our model \eqref{kirchhoff}. The main challenge in performing these methods is the loss of derivatives caused by the nonlinearity. To overcome this fundamental difficulty,  we employ an alternative approach based on Nash-Moser iteration. The key ingredient is to verify the invertibility of the linearized operator and to provide an appropriate estimate for its inverse operator.
To achieve this goal, we are devoted to constructing  an invertible transformation, which conjugates the linearized operator to an operator $\mathcal{D}$ with constant coefficients plus a remainder $\tilde{\mathcal{R}}$. In this process, we encountered the challenge known as the characteristic ``small divisors problem" arising from resonance phenomena. In order to overcome this difficulty, we have to assume that the frequency vector is Diophantine. On the other hand, the principal part $\mathcal{D}$  admits a diagonal representation with respect to  Fourier bases in  both temporal and spatial variables. Through careful analysis,  we invert $\mathcal{D}$ and give the regularising nature of $\mathcal{D}^{-1}$ for $\epsilon$  in a domain that does not include the origin $\epsilon=0$. As a consequence,  there is a straightforward consequence that $\tilde{\mathcal{R}}$ is relatively bounded with respect to $\mathcal{D}^{-1}$.
In our study, the mathematical analysis presents significant technical difficulties and challenges. 
The theoretical framework developed here can be extended to establish the existence of response solutions to the singular perturbation structure PDEs with nonlocal energy
  damping such as
\begin{align*}
\frac{1}{\epsilon} u_t+\alpha\Big(\lambda\int_{\mathbb{T}^d}|\Delta u|^2 \mathrm{d}x+\int_{\mathbb{T}^d}|u_t|^2 \mathrm{d}x\Big) u_t,
\end{align*}
which exhibits singular behavior as $\epsilon \rightarrow 0.$

\subsection*{Structure of the paper}
The paper is structured as follows.  Section \ref{sec:1} states the main theorem. In Section \ref{sec:2}, we construct  an invertible transformation to reduce the linearized operator to an operator with constant coefficients plus a remainder under non-resonance conditions. Subsequently,  by considering $\epsilon$ in a domain that excludes the origin $\epsilon=0$ and a priori bounds for the high Sobolev norms of the solution, we prove  the invertibility of the linearized operator, which is the crucial ingredient in the Nash--Moser method. Then we provide a proper estimate for its inverse. In Section \ref{sec:3}, we present  the Nash--Moser iteration scheme and complete the proof of the main theorem.


\section{Main results}\label{sec:1}
This section presents the main theorem on the existence of response solutions of  equation \eqref{kirchhoff1}.
Equivalently, consider the  rescaling of equation \eqref{kirchhoff}.
Through the rescaling transformation $u\mapsto\epsilon^{p}u$ with $p\in\left(\tfrac{1}{2},1\right)$
and setting  $\varphi=\omega t$, equation \eqref{kirchhoff} can be transformed into: 
\begin{align}\label{kirchhoff1}
(\omega\cdot\nabla_{\varphi})^2u+\lambda\Delta^2u
-\mu\Delta u+\epsilon(\omega\cdot\nabla_{\varphi})u
+\epsilon^{2p}\alpha\big(\lambda\int_{\mathbb{T}^d}|\Delta u|^2 \mathrm{d}x\nonumber\\
+\int_{\mathbb{T}^d}|(\omega\cdot\nabla_{\varphi}) u|^2 \mathrm{d}x\big)(\omega\cdot\nabla_{\varphi})u
=\epsilon^{2-p}g(\varphi,x)
\end{align}
for $(\varphi,x)\in\mathbb{T}^{\nu+d}$. 

We now introduce some necessary notation and non-resonance conditions. 
For any  $\rho>0, s\geq0$, let
\begin{align*}
\mathbb{T}^{\nu}_{\rho}:=\left\{ \varphi\in \mathbb{C}^\nu/(2\pi\mathbb{Z})^\nu:\Re(\varphi_k)\in\mathbb{T},|\Im(\varphi_k)|\leq\rho,k=1,\cdots,\nu\right\}
\end{align*}
and define the Sobolev space $H^{\rho,s}_0$  as follows:
\begin{align*}
H^{\rho,s}_0:=H^{\rho,s}_0(\mathbb{T}^{\nu+d};\mathbb{R}):=\Big\{&u:\mathbb{T}^{\nu+d}\rightarrow\mathbb{R},
u(\varphi,x)
=\sum_{k\in\mathbb{Z}^{\nu},j\in\mathbb{Z}^d}u_{k,j}e^{{\rm i}(k\cdot\varphi+j\cdot x)},\\
& u_{-k,-j}=\overline{u_{k,j}},\|u\|_{\rho,s}<+\infty,
\int_{\mathbb{T}^{\nu+d}}u(\varphi,x)\mathrm{d}\varphi\mathrm{d}x=0\Big\},
\end{align*}
where $\langle k,j\rangle:=\max\{1,|k|,|j|\}$ and
\begin{align*}
\|u\|^{2}_{\rho,s}:=\sum_{k\in\mathbb{Z}^{\nu},j\in\mathbb{Z}^{d}}|u_{k,j}|^2e^{2\rho(|k|+|j|)}\langle k,j\rangle^{2s}.
\end{align*}
 Obviously, $(H^{\rho,s}_0,\|\cdot\|_{\rho,s})$ is a Banach space as well as a Hilbert space.

\begin{rema}
According to the definition of the space $H^{\rho,s}_0$, one has $u_{0,0}=0$ when $k=0$ and $j=0$.
\end{rema}

In the space $H^{\rho,s}_0$, every function 
  admits a bound analytic extension on the complex strip $|\Im(\varphi_k)|<\rho$,
 with  trace function on $|\Im(\varphi_k)|=\rho$, belonging to $H^{\rho,s}_0(\mathbb{T}^{\nu+d};\mathbb{C})$.  If $s>{(\nu+d)}/{2}$, then the space $H^{\rho,s}_0$ is a Banach algebra with respect to multiplication of functions. More precisely,
\begin{align*}
\|vw\|_{\rho,s}\leq C(s)\|v\|_{\rho,s}\|w\|_{\rho,s}, \quad\forall v,w\in{H}^{\rho,s}_0.
\end{align*}
Similarly, we define the space $H^{\rho,s}_{\varphi,0}:=H^{\rho,s}_{\varphi,0}(\mathbb{T}^{\nu};\mathbb{R})$  consisted of functions depending only on $\varphi$ with the zero mean,  equipped with the norm
\begin{align*}
\|u_0\|^{2}_{\varphi,\rho,s}:=\sum_{k\in\mathbb{Z}^{\nu}}|u_{0,k}|^2e^{2\rho|k|}\langle k\rangle^{2s},
\end{align*}
where $\langle k\rangle:=\max\{1,|k|\}$. Clearly, the space $H^{\rho,s}_{\varphi,0}$ is also a Banach algebra for $s>\nu/2$.

From now on,  we fix
\begin{align*}
s_0:=\lfloor \frac{\nu+d}{2}\rfloor+1,
\end{align*}
where $\lfloor\cdot\rfloor$ stands for the integral part of $\cdot$. Define the operator $(\omega\cdot\nabla_{\varphi})^{-1}$ as
\begin{align*}
(\omega\cdot\nabla_{\varphi})^{-1}1&:=0,\\
(\omega\cdot\nabla_{\varphi})^{-1}e^{\mathrm{i}k\cdot \varphi}&:=\frac{e^{\mathrm{i}k\cdot \varphi}}{\mathrm{i}\omega\cdot k},\quad\forall k\in\mathbb{Z}^\nu\backslash\{0\}.
\end{align*}
In other words, $(\omega\cdot\nabla_{\varphi})^{-1}h$ is the primitive of $h$ with zero average in $\varphi$.
In addition, for all $\delta>0$ small enough, we define the set
\begin{align*}
\Lambda_{\delta}:=\Big\{\epsilon\in\mathbb{R}^+:\tfrac{1}{2}\delta\leq\epsilon\leq\delta\Big\}.
\end{align*}

Furthermore, it is essential to impose a constraint on the magnitude of the frequency $\omega$. Without loss of generality, we select
$\omega\in\mathbb{R}^{\nu}$ such that
\begin{align}\label{f:bound}
|\omega|^2=\sum^\nu_{k=1}\omega^2_{k}\leq 1.
\end{align}
A frequency vector $\omega \in \mathbb{R}^\nu$ is called Diophantine if for some
fixed $\gamma>1$ and $\tau > \nu-1$,
\begin{align}\label{non-resonance0}
|\omega\cdot k|^{-1}\leq\gamma |k|^{\tau},\quad \forall k\in\mathbb{Z}^\nu\backslash\{0\}.
\end{align}
 The non-resonance conditions \eqref{non-resonance0} ensure the absence of resonances and permit the existence of polynomially growing small divisors.

Sometimes, we impose non-resonance conditions which are   weaker than the Diophantine conditions, 
including:
\begin{itemize}
\item Brjuno conditions:
\begin{align*}
\sum_{m \geq 0} \frac{1}{2^{m}} \max_{0 < |k| \leq 2^m, k \in \mathbb{Z}^\nu} \ln|\omega \cdot k|^{-1} < +\infty
\end{align*}
\item
Liouvillean conditions (for some  $\gamma >1$ and complex  strip width $\rho>0$):
\begin{align*}
|\omega\cdot k|^{-1}\leq\gamma e^{\frac{\rho}{M}|k|},\quad \forall k\in\mathbb{Z}^\nu\backslash\{0\}.
\end{align*}
\end{itemize}

Additionally, define the nonlinear operator
\begin{align*}
F(u):=(\omega\cdot\nabla_{\varphi})u(\lambda\int_{\mathbb{T}^d}|\Delta u|^2 \mathrm{d}x+\int_{\mathbb{T}^d}|(\omega\cdot\nabla_{\varphi}) u|^2 \mathrm{d}x).
\end{align*}
Equation \eqref{kirchhoff1} can be written equivalently as
\begin{align}\label{k-equivalence2}	
(\omega\cdot\nabla_{\varphi})^2u+\lambda\Delta^2u-\mu\Delta u+\epsilon(\omega\cdot\nabla_{\varphi})u+\epsilon^{2p} \alpha F(u)=\epsilon^{2-p}g.
\end{align}
Furthermore, equation \eqref{k-equivalence2}  can be formulated abstractly as
\begin{align}\label{E:Func}
\mathcal{F}(\epsilon,\omega,u)=0,
\end{align}
where we define
\begin{align*}	\mathcal{F}(\epsilon,\omega,u):=(\omega\cdot\nabla_{\varphi})^2u+\lambda\Delta^2u-\mu\Delta u+\epsilon(\omega\cdot\nabla_{\varphi})u+\epsilon^{2p}\alpha F(u)-\epsilon^{2-p}g.
\end{align*}
Equivalently, we only have to seek  a torus embedding $\varphi\mapsto u(\varphi,\cdot)$ in  $H^{\rho,s}_0$  satisfying equation \eqref{E:Func}.

The following theorem constitutes the main result of this paper, establishing the existence of response solutions for beam equations with nonlocal nonlinear damping under Diophantine frequency conditions.
\begin{theo}\label{main-result}
Fix  $\rho_0>0,s\geq s_0$, and $p\in\left(\frac{1}{2},1\right)$, and that $\lambda, \mu,\alpha$ are positive constant coefficients of order $\mathcal{O}(1)$. Assume  that the external force term $g$ is in $H^{\rho_0,s}_0$ 
 and the frequency vector $\omega$ satisfies \eqref{f:bound}. Under that the non-resonance conditions  \eqref{non-resonance0} hold for some fixed $\gamma>1$ and $\tau > \nu-1$, then there exists some constant $\kappa\in(0,1)$ small enough such that for any $\delta>0$ with $\delta^{\min\{2p-1,1-p\}}\gamma\leq\kappa$, when $\epsilon$ belongs to $\Lambda_{\delta}$, equation \eqref{E:Func}, which is equivalent to equation \eqref{kirchhoff}, admits a  zero-mean  solution  $u(\epsilon,\omega)\in H^{{\rho_0}/{2},s}_0$, which is quasi-periodic in time.
\end{theo}

In fact, equation \eqref{E:Func} can be solved via the Nash--Moser iteration, as shown in Lemma \ref{le:iterative}, and $\rho_0$ is a constant denoting the width of the complex strip at initial step of iteration.


\section{Reducibility and invertability of the linearized operator}\label{sec:2}

In this section, we are devoted to verifying the invertibility of the linearized operator and establishing  estimates of its inverse operator.
 Due to the variable coefficients in the linearized operator, we need to construct an invertible transformation which conjugates it to a constant-coefficient operator plus a remainder term.

According to Lemma \ref{le:composition}, the linearized operator $\mathcal{L}(\epsilon,\omega,u)$ associated with \eqref{E:Func} is
\begin{align}
\mathcal{L}(\epsilon,\omega,u)h
&=\mathrm{D}\mathcal{F}(\epsilon,\omega,u)[h]\nonumber\\
&=\frac{\mathrm{d}}{\mathrm{d}\xi}
\mathcal{F}(\epsilon,\omega,u+\xi h)\Big|_{\xi=0}\nonumber\\ &=(\omega\cdot\nabla_\varphi)^2h+\lambda\Delta^2h-\mu\Delta h+\epsilon(\omega\cdot\nabla_{\varphi})h+\epsilon^{2p}\alpha\mathrm{D}F(u)[h]\nonumber\\ &=(\omega\cdot\nabla_\varphi)^2h+\lambda\Delta^2h-\mu\Delta h+\epsilon(\omega\cdot\nabla_{\varphi})h+\epsilon^{2p}\alpha b(\varphi)(\omega\cdot\nabla_{\varphi})h+\epsilon^{2p}\alpha\mathcal{R}h,\label{linearized}
\end{align}
where the coefficient $b$ and the  operator $ \mathcal{R}$  are given by
\begin{align}
b(\varphi)&=\lambda\int_{\mathbb{T}^d}|\Delta u|^2\mathrm{d}x+\int_{\mathbb{T}^d}|(\omega\cdot\nabla_{\varphi}) u|^2 \mathrm{d}x,\nonumber\\
\mathcal{R}h&=\Big(2\lambda\int_{\mathbb{T}^d}\Delta u\cdot{\Delta h}\mathrm{d}x+2\int_{\mathbb{T}^d}(\omega\cdot\nabla_{\varphi})u\cdot(\omega\cdot\nabla_{\varphi})h\mathrm{d}x\Big)(\omega\cdot\nabla_{\varphi})u.\nonumber
\end{align}

\subsection{Reducibility of the linearized operator}

The objective of this subsection focuses on providing the invertible transformation mentioned above.
Our precise statement is as follows:

\begin{prop}\label{prop:change}
Fix  $\rho>0,s\geq s_0$, $p\in\left(\frac{1}{2},1\right)$ and $\tau > \nu-1$.  Suppose that the frequency vector  $\omega$  satisfies the condition of Theorem \ref{main-result}. Provided
\begin{align*}
\|u\|_{\rho,s+\tau+6}\leq1,
\end{align*}
there exists a constant  $\kappa_0\in(0,1)$ small enough such that for any $\delta>0$ with $\delta^{2p}\gamma\leq\kappa_0$, when $\epsilon\in\Lambda_{\delta}$,  we can construct an invertible transformation  $\mathcal{A}$ yielding the conjugation
\begin{align}\label{L2}
\mathcal{A}^{-1}\circ\mathcal{L}(\epsilon,\omega,u)\circ\mathcal{A}
=\tilde{\mathcal{L}}(\epsilon,\omega,u)=\mathcal{D}+\tilde{\mathcal{R}},	
\end{align}
where $\tilde{\mathcal{R}}$ is a zeroth-order operator and
\begin{align}\label{Ope:D} \mathcal{D}=(\omega\cdot\nabla_{\varphi})^2+\lambda\Delta^2-\mu\Delta+\epsilon(\omega\cdot\nabla_{\varphi})+\epsilon^{2p}\iota(\omega\cdot\nabla_{\varphi})
\end{align}
with $\iota\equiv\iota(u)$ being a constant related to $u$.
	
Furthermore, there exist two positive constants $K,\tilde{K}$ such that
	
$\mathrm{(i)}$ The constant coefficient $\iota\equiv\iota(u)$ satisfies that
\begin{align}\label{coe}
|\iota|\leq K\|u\|^2_{\rho,s+4}.
\end{align}
	
$\mathrm{(ii)}$ The transformation $\mathcal{A}$ and its inverse $\mathcal{A}^{-1}$ admit the following estimates
\begin{align}
\|\mathcal{A}h\|_{\rho,s}&\leq
K\|h\|_{\rho,s},	\label{c}\\
\|\mathcal{A}^{-1}h\|_{\rho,s}&\leq
K\|h\|_{\rho,s}.\label{d}	
\end{align}
	
$\mathrm{(iii)}$ The remainder $\tilde{\mathcal{R}}$ satisfies that
\begin{align}\label{R2}
\|\tilde{\mathcal{R}}h\|_{\rho,s}\leq\delta^{2p}\gamma
\tilde{K}\|u\|^2_{\rho,s+\tau+6}\|h\|_{\rho,s}.
\end{align}
\end{prop}

\subsection*{Reduction of the first order term}

In order to prove Proposition \ref{prop:change}, we employ  a multiplication transformation 
to convert the variable coefficient $b({\varphi})$ in the operator $\mathcal{L}(\epsilon,\omega,u)$ into a constant. 
Let $\beta:\mathbb{T}^{\nu}_{\rho}\mapsto\mathbb{C} $ be a real analytic function approximating $1$ and  define  the  multiplication operator
\begin{align}
\mathcal{A}: h\mapsto\beta(\varphi)h,\nonumber
\end{align}
where
the unknown function $\beta$ will be determined later. Therefore, its inverse can be  given by
\begin{align}
\mathcal{A}^{-1}:h\mapsto\beta^{-1}(\varphi)h.\nonumber
\end{align}
The following conjugation rules hold:
\begin{align}
\mathcal{A}^{-1}\circ\Delta\circ\mathcal{A}&=\Delta,\nonumber\\	
\mathcal{A}^{-1}\circ\Delta^2\circ\mathcal{A}&=\Delta^2,\nonumber\\
\mathcal{A}^{-1}\circ(\omega\cdot\nabla_{\varphi})\circ\mathcal{A}&=\omega\cdot\nabla_{\varphi}+\beta^{-1}(\varphi)(\omega\cdot\nabla_{\varphi}\beta(\varphi)),\nonumber\\
\mathcal{A}^{-1}\circ(\omega\cdot\nabla_{\varphi})^2\circ\mathcal{A}&=(\omega\cdot\nabla_{\varphi})^2
+2\beta^{-1}(\varphi)(\omega\cdot\nabla_{\varphi}\beta(\varphi))\omega\cdot\nabla_{\varphi}+\beta^{-1}(\varphi)((\omega\cdot\nabla_{\varphi})^2\beta(\varphi)),\nonumber\\
\mathcal{A}^{-1}\circ
b(\varphi)(\omega\cdot\nabla_{\varphi})\circ\mathcal{A}&=\beta^{-1}(\varphi)
b(\varphi)(\omega\cdot\nabla_{\varphi}\beta(\varphi))+b(\varphi)(\omega\cdot\nabla_{\varphi}).\nonumber
\end{align}
Then it is straightforward that
\begin{align}
\mathcal{A}^{-1}\circ\mathcal{L}(\epsilon,\omega,u)\circ\mathcal{A}\nonumber
&=\mathcal{A}^{-1}\circ((\omega\cdot\nabla_{\varphi})^2+\lambda\Delta^2-\mu\Delta+\epsilon(\omega\cdot\nabla_{\varphi})\nonumber\\
&\quad+\epsilon^{2p}\alpha
b(\varphi)(\omega\cdot\nabla_{\varphi})+\epsilon^{2p}\alpha\mathcal{R})\circ\mathcal{A}\nonumber\\ &=\mathcal{A}^{-1}\circ(\omega\cdot\nabla_{\varphi})^2\circ\mathcal{A}+\lambda\mathcal{A}^{-1}\circ\Delta^2\circ\mathcal{A}
-\mu\mathcal{A}^{-1}\circ\Delta\circ\mathcal{A}+\epsilon\mathcal{A}^{-1}\circ(\omega\cdot\nabla_{\varphi})\circ\mathcal{A}\nonumber\\
&\quad+\epsilon^{2p}\alpha\mathcal{A}^{-1}\circ
b(\varphi)(\omega\cdot\nabla_{\varphi})\circ\mathcal{A}+\epsilon^{2p}\alpha\mathcal{A}^{-1}\circ\mathcal{R}\circ\mathcal{A}	\nonumber\\
&=\tilde{\mathcal{L}}(\epsilon,\omega,u)\nonumber,
\end{align}
where
\begin{align}
\tilde{\mathcal{L}}(\epsilon,\omega,u)&:=(\omega\cdot\nabla_{\varphi})^2+\lambda\Delta^2-\mu\Delta+\epsilon(\omega\cdot\nabla_{\varphi})+	\epsilon^{2p}(2\epsilon^{-2p}\beta^{-1}(\varphi)(\omega\cdot\nabla_{\varphi}\beta(\varphi))\nonumber\\
&\quad+\alpha b(\varphi))(\omega\cdot\nabla_{\varphi})+\tilde{\mathcal{R}},	\nonumber\\
\tilde{\mathcal{R}}&:=\epsilon^{2p}\alpha\mathcal{A}^{-1}\circ\mathcal{R}\circ\mathcal{A}+\beta^{-1}(\varphi)((\omega\cdot\nabla_{\varphi})^2\beta(\varphi))
+\epsilon\beta^{-1}(\varphi)(\omega\cdot \nabla_\varphi\beta(\varphi))\nonumber\\
&\quad+\epsilon^{2p}\alpha\beta^{-1}(\varphi)b(\varphi)(\omega\cdot \nabla_\varphi\beta(\varphi))\label{E:R12}.
\end{align}
Suppose that for some constant $\iota$, this holds:
\begin{align}\label{beta-equation}
2\epsilon^{-2p}\beta^{-1}(\varphi)(\omega\cdot\nabla_{\varphi}\beta(\varphi))+\alpha b(\varphi)\equiv\iota.
\end{align}
Observe that
\begin{align}
\beta^{-1}(\varphi)(\omega\cdot\nabla_{\varphi}\beta(\varphi))=\omega\cdot\nabla_{\varphi}\ln(\beta(\varphi)).\nonumber
\end{align}
This leads to
$\int_{\mathbb{T}^{\nu}}(\iota-\alpha b(\varphi))\mathrm{d}\varphi=0$, that is,
\begin{align}
\iota=\frac{\alpha}{(2\pi)^\nu}\int_{\mathbb{T}^{\nu}}b(\varphi)\mathrm{d}\varphi.\nonumber
\end{align}
Thus equation \eqref{beta-equation} can be solved and  explicitly given by
\begin{align}
\beta(\varphi):=\mathrm{exp}\Big(\frac{1}{2}\epsilon^{2p}(\omega\cdot\nabla_{\varphi})^{-1}(\iota-\alpha b(\varphi))\Big).\nonumber
\end{align}
As a result, we rewrite $\tilde{\mathcal{L}}(\epsilon,\omega,u)$ as
\begin{align} \tilde{\mathcal{L}}(\epsilon,\omega,u)=(\omega\cdot\nabla_{\varphi})^2+\lambda\Delta^2-\mu\Delta+\epsilon(\omega\cdot\nabla_{\varphi})+\epsilon^{2p}\iota (\omega\cdot\nabla_{\varphi})+\tilde{\mathcal{R}}.\nonumber
\end{align}

\subsection*{Proof of Proposition \ref{prop:change}}
We are now in a position establish upper-bound estimates for  $|\iota|$ and the operators $\mathcal{A}$, $\mathcal{A}^{-1}$ along with the remainder $\tilde{\mathcal{R}}$ acting on $H^{\rho,s}_0$.
Let us firstly present two crucial estimates arising from composition operator properties.
\begin{lemm}
For $\rho>0$ and $s\geq s_0$,  there exists a constant $K'>0$ depending on $s$ such that
\begin{align}\label{upper:b}
\|b\|_{\varphi,\rho,s}&\leq K'\|u\|^2_{\rho,s+4},\\
\|\mathcal{R}h\|_{\rho,s}&\leq
K'\|u\|^2_{\rho,s+4}\|h\|_{\rho,s}.\label{upper:R}
\end{align}
\end{lemm}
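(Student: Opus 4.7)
The proof reduces to two tasks: bounding $\|b\|_{\varphi,\rho,s}$ and bounding $\|\mathcal{R}h\|_{\rho,s}$. The main tools will be the Banach algebra property of $H^{\rho,s}$ for $s\geq s_0>(\nu+d)/2$ stated earlier, together with a careful use of integration by parts in $x$ to avoid a derivative loss on $h$.

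First I would record a simple observation that will be used twice: if $u\in H^{\rho,s}$ has Fourier coefficients $u_{k,j}$, then the $x$-average $\Pi u(\varphi):=\int_{\mathbb{T}^{d}}u(\varphi,x)\,\mathrm{d}x$ has Fourier coefficients $(2\pi)^{d}u_{k,0}$, so
\[
\|\Pi u\|_{\varphi,\rho,s}^{2}=(2\pi)^{2d}\sum_{k}|u_{k,0}|^{2}e^{2\rho|k|}\langle k\rangle^{2s}\leq (2\pi)^{2d}\|u\|_{\rho,s}^{2}.
\]
Moreover, viewing a function $f(\varphi)\in H^{\rho,s}_{\varphi}$ as an element of $H^{\rho,s}$ that is constant in $x$, one has $\|f\|_{\rho,s}=\|f\|_{\varphi,\rho,s}$, so the Banach algebra property of $H^{\rho,s}$ applies directly to products of $\varphi$-only functions with $(\varphi,x)$-functions.

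For \eqref{upper:b}, write $|\nabla v|^{2}=\sum_{i=1}^{d}(\partial_{x_i}v)^{2}$. Each factor satisfies $\|\partial_{x_i}v\|_{\rho,s}\leq\|v\|_{\rho,s+1}$, and by the Banach algebra property $\|(\partial_{x_i}v)^{2}\|_{\rho,s}\leq C(s)\|v\|_{\rho,s+1}^{2}$. Summing over $i$ and applying the averaging bound above,
\[
\|b\|_{\varphi,\rho,s}\leq(2\pi)^{d}\||\nabla v|^{2}\|_{\rho,s}\leq(2\pi)^{d}dC(s)\|v\|_{\rho,s+1}^{2}\leq K'\|v\|_{\rho,s+2}^{2}.
\]

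The estimate \eqref{upper:R} is the only place where one must be slightly careful. Integrating by parts in $x$ on the $d$-torus, $\int_{\mathbb{T}^{d}}\nabla v\cdot\nabla h\,\mathrm{d}x=-\int_{\mathbb{T}^{d}}(\Delta v)\,h\,\mathrm{d}x$. Then the averaging bound and the Banach algebra yield
\[
\Bigl\|\int_{\mathbb{T}^{d}}(\Delta v)h\,\mathrm{d}x\Bigr\|_{\varphi,\rho,s}\leq(2\pi)^{d}\|(\Delta v)h\|_{\rho,s}\leq(2\pi)^{d}C(s)\|\Delta v\|_{\rho,s}\|h\|_{\rho,s}\leq(2\pi)^{d}C(s)\|v\|_{\rho,s+2}\|h\|_{\rho,s}.
\]
Finally, multiplying by $(\omega\cdot\nabla_{\varphi})v$ using the Banach algebra (and $\|(\omega\cdot\nabla_{\varphi})v\|_{\rho,s}\leq|\omega|\,\|v\|_{\rho,s+1}\leq\|v\|_{\rho,s+2}$ by \eqref{f:bound}) gives
\[
\|\mathcal{R}h\|_{\rho,s}\leq 2C(s)\|(\omega\cdot\nabla_{\varphi})v\|_{\rho,s}\Bigl\|\int_{\mathbb{T}^{d}}(\Delta v)h\,\mathrm{d}x\Bigr\|_{\rho,s}\leq K'\|v\|_{\rho,s+2}^{2}\|h\|_{\rho,s},
\]
after absorbing all $(2\pi)^{d}$ and $C(s)$ factors into $K'$.

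The only genuine subtlety is the step just highlighted: the naive bound on $\nabla v\cdot\nabla h$ via the algebra property costs one $x$-derivative on $h$ and would give $\|h\|_{\rho,s+1}$, which would be fatal for the subsequent Nash--Moser scheme. The integration by parts in $x$ transfers this derivative onto $v$, where a regularity loss of two derivatives is affordable (producing $\|v\|_{\rho,s+2}$) while keeping the $H^{\rho,s}$-norm of $h$; this is the same mechanism underlying the authors' description of $\tilde{\mathcal{R}}$ as \emph{space-diagonal and regularising}.
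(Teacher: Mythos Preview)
Your proof is correct. The paper's own proof is a one-line reference to the Fourier computation carried out in Lemma~\ref{le:composition} (the estimate labelled \eqref{a-varphi}): there the authors expand $a(\varphi)=\int_{\mathbb{T}^d}|\nabla v|^2\,\mathrm{d}x$ in Fourier series and apply Cauchy--Schwarz to the sum $\sum_{j}|j|^2 v_{k-l,j}\,w_{l,-j}$, placing the full weight $|j|^4$ on one factor and none on the other. For $b$ this is literally \eqref{a-varphi}; for $\mathcal{R}h$ the same Cauchy--Schwarz step puts the $|j|^4$ on $v$ and leaves $h$ undifferentiated, which is the Fourier-side manifestation of your integration by parts.

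Your route is different in presentation but equivalent in content: rather than working coefficient-by-coefficient, you use the two black-box facts (Banach algebra, $x$-averaging is bounded $H^{\rho,s}\to H^{\rho,s}_\varphi$) and make the derivative transfer explicit via $\int_{\mathbb{T}^d}\nabla v\cdot\nabla h\,\mathrm{d}x=-\int_{\mathbb{T}^d}(\Delta v)h\,\mathrm{d}x$. This is cleaner and makes the ``regularising'' nature of $\mathcal{R}$ transparent, whereas in the paper's Fourier argument that feature is hidden inside the Cauchy--Schwarz split. Conversely, the paper's direct computation is slightly sharper on exponents (it actually yields $\|v\|_{\rho,s+2}\|v\|_{\rho,s}$ rather than $\|v\|_{\rho,s+1}^2$ for \eqref{upper:b}), though this refinement is not needed for the stated lemma.
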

\begin{proof}
Following analogous arguments to those establishing \eqref{a-varphi} and \eqref{B-varphi}, we derive estimates \eqref{upper:b} and \eqref{upper:R}.
\end{proof}

Now let us complete the proof of Proposition \ref{prop:change}.

\begin{proof}
The proof proceeds in three steps.

\textbf{Step 1: Upper-bound estimate of $|\iota|$}.
 It follows from \eqref{2} that for $s\geq s_0$,
\begin{align}
\max_{\varphi\in\mathbb{T}^{\nu}}|b(\varphi)|\leq{C}\|{b}\|_{\varphi,\rho,s},\nonumber
\end{align}
which leads to
\begin{align}
|\iota|\leq
\frac{\alpha}{(2\pi)^\nu}\int_{\mathbb{T}^{\nu}}|
b(\varphi)|\mathrm{d}\varphi\leq
\alpha\max_{\varphi\in\mathbb{T}^{\nu}}|b(\varphi)|\leq{K}\|u\|^2_{\rho,s+4}.\nonumber
\end{align}
This establishes \eqref{coe}.

\textbf{Step 2: Bounds for $\mathcal{A}$, $\mathcal{A}^{-1}$ acting on $H^{\rho,s}_0$}.
Using the Taylor expansion, it is obvious that $e^t=1+t+\frac{t^2}{2!}+\cdots$ for $t$  small enough. Provided the non-resonance conditions \eqref{non-resonance0}, we derive that for $\delta^{2p}\gamma\leq\kappa_0$  small enough,
\begin{align}
\|\beta-1\|_{\varphi,\rho,s}
&\leq \delta^{2p}{C}\left\|(\omega\cdot\nabla_{\varphi})^{-1}(\iota-
\alpha b)\right\|_{\varphi,\rho,s}\exp (\delta^{2p}{C}\|(\omega\cdot\nabla_{\varphi})^{-1}(\iota-
\alpha b)\|_{\varphi,\rho,s})
\nonumber\\
&\leq\delta^{2p}\gamma{C}\|\iota-\alpha b\|_{\varphi,\rho,s+\tau}\exp(\delta^{2p} {C}\gamma\|\iota-
\alpha b\|_{\varphi,\rho,s+\tau}).\nonumber
\end{align}
Because of \eqref{upper:b} and \eqref{coe}, we obtain
\begin{align}
\|\iota-\alpha b\|_{\varphi,\rho,s}\leq C_1\|u\|^2_{\rho,s+4},\nonumber
\end{align}
which leads to
\begin{align}\label{b}
\|\beta-1\|_{\varphi,\rho,s}\leq\delta^{2p}\gamma C_2\|u\|^2_{\rho,s+\tau+4}.
\end{align}
Clearly, it follows from \eqref{b} that \eqref{c} and \eqref{d} hold, respectively.

\textbf{Step 3: Upper-bound estimate of the remainder $\tilde{\mathcal{R}}$ acting on $H^{\rho,s}_0$}.
 If $\|u\|_{\rho,s+\tau+6}\leq1$, then applying \eqref{c}, \eqref{d}, \eqref{upper:b}--\eqref{b}, and Neumann series yields that
\begin{align}	
\|\epsilon^{2p}\alpha\mathcal{A}^{-1}\circ\mathcal{R}\circ\mathcal{A}h\|_{\rho,s}\nonumber
&\leq\delta^{2p}\alpha K\|\mathcal{R}\circ\mathcal{A}h\|_{\rho,s}\nonumber\\
&\leq\delta^{2p}K^2K''\|u\|^2_{\rho,s+4}\|h\|_{\rho,s}.\nonumber
\end{align}
Furthermore, since
\begin{align}		\omega\cdot\nabla_{\varphi}\beta=\omega\cdot\nabla_{\varphi}(\beta-1),\quad(\omega\cdot\nabla_{\varphi})^2\beta=(\omega\cdot\nabla_{\varphi})^2(\beta-1),\nonumber
\end{align}
we have
\begin{align}
\|\beta^{-1}((\omega\cdot\nabla_{\varphi})^2\beta)h\|_{\rho,s}&\leq
C_3\|(\omega\cdot\nabla_{\varphi})^2\beta\|_{\varphi,\rho,s}\|h\|_{\rho,s}\nonumber\\
&\leq
C_3\|\beta-1\|_{\varphi,\rho,s+2}\|h\|_{\rho,s}\nonumber\\
&\leq\delta^{2p}\gamma
K\|u\|^2_{\rho,s+\tau+6}\|h\|_{\rho,s}\nonumber,\\
\|\epsilon\beta^{-1}(\omega\cdot \nabla_\varphi\beta)h\|_{\rho,s}	&\leq\delta\|\beta^{-1}\|_{\varphi,\rho,s}\|(\omega\cdot\nabla_\varphi)(\beta-1)\|_{\varphi,\rho,s}\|h\|_{\rho,s}\nonumber\\
&\leq\delta^{2p+1}\gamma
K\|u\|^2_{\rho,s+\tau+5}\|h\|_{\rho,s}\nonumber,
\end{align}
and
\begin{align}
\|\epsilon^{2p}\alpha\beta^{-1}b(\omega\cdot \nabla_\varphi\beta)h\|_{\rho,s}&\leq\delta^{2p}\alpha\|\beta^{-1}\|_{\varphi,\rho,s}\|b\|_{\varphi,\rho,s}\|\omega\cdot \nabla_\varphi\beta\|_{\varphi,\rho,s}\|h\|_{\rho,s}\nonumber\\
&\leq\delta^{4p}\gamma
K\|u\|^2_{\rho,s+\tau+5}\|h\|_{\rho,s}.\nonumber
\end{align}
Following the above calculations and using \eqref{E:R12}, we can obtain \eqref{R2}, which completes the proof of the lemma.
\end{proof}

\subsection{Invertibility of the linearized operator}

In this subsection, we aim at checking the invertibility of the linearized  operator $\mathcal{L}(\epsilon,\omega,u)$  given by \eqref{linearized} and  giving a proper estimate for its inverse.

According to  equality \eqref{L2}, it can be clearly deduced that
\begin{align}\label{E:identity}
\mathcal{L}(\epsilon,\omega,u)=\mathcal{A}\circ\tilde{\mathcal{L}}(\epsilon,\omega,u)\circ\mathcal{A}^{-1}=\mathcal{A}\circ(\mathcal{D}+\tilde{\mathcal{R}})\circ\mathcal{A}^{-1}.
\end{align}
In Fourier bases in time and space, the main part $\mathcal{D}$ is  diagonal. Then for $\epsilon$  in a domain that does not include the origin $\epsilon=0$, we invert $\mathcal{D}$ and give the estimate for $\mathcal{D}^{-1}$ acting on $H^{\rho,s}_0$.

The following lemma  corresponds to the invertibility of the main part $\mathcal{D}$.

\begin{lemm}\label{le:Dinverse}
Let $\rho>0,s>0$, $p\in\left(\frac{1}{2},1\right)$, and $\tau>\nu-1$ be fixed. If $\|u\|_{\rho,s+\tau+6}\leq1$, when $\epsilon\in\Lambda_{\delta}$ for any $0<\delta\ll1$, then  the operator $\mathcal{D}:H^{\rho,s}_0\rightarrow H^{\rho,s}_0$ (recall \eqref{Ope:D}) is invertible satisfying
\begin{align}
\mathcal{D}^{-1}:H^{\rho,s}_0\rightarrow H^{\rho,s}_0\nonumber
\end{align}
and there exists some constant ${K}_0>0$ such that for $h\in H^{\rho,s}_0$,
\begin{align}\label{E:inverse2}
\|\mathcal{D}^{-1}h\|_{\rho, s}\leq K_0\delta^{-1}\|h\|_{\rho,s}.
\end{align}
\end{lemm}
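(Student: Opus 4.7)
The plan is to exploit the fact that $\mathcal{D}$ is diagonal in the joint Fourier basis $\{e^{\mathrm{i}(k\cdot\varphi+j\cdot x)}\}$, establish a uniform lower bound on the modulus of its eigenvalues when restricted to the subspace with $j\neq 0$, and then read off the resolvent estimate directly.

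First I would compute
\begin{align*}
\mathcal{D}\, e^{\mathrm{i}(k\cdot\varphi+j\cdot x)}=\lambda_{k,j}\,e^{\mathrm{i}(k\cdot\varphi+j\cdot x)},\qquad
\lambda_{k,j}:=|j|^4-(\omega\cdot k)^2+\mathrm{i}(\epsilon+\epsilon^{\tfrac32}\mu)(\omega\cdot k).
\end{align*}
Since elements of $H^{\rho,s}_0$ have $v_{k,0}=0$ for every $k$, only indices with $j\neq 0$ matter. From hypothesis $|v|_{\rho,s+4}\leq 1$ together with \eqref{coe}, one has $|\mu|\leq K$, so for $\delta$ small enough and $\epsilon\in\Lambda_\delta$,
\begin{align*}
\tfrac{1}{4}\delta\leq\epsilon+\epsilon^{\tfrac32}\mu\leq 2\delta.
\end{align*}

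Next I would bound $|\lambda_{k,j}|$ from below by splitting into three regimes. If $k=0$ and $j\neq 0$, then $\lambda_{0,j}=|j|^4\geq 1$. If $k\neq 0$ and $|j|^4\geq 2(\omega\cdot k)^2$, the real part alone gives $|\lambda_{k,j}|\geq\tfrac12 |j|^4\geq\tfrac12$. Finally, if $k\neq 0$ and $|j|^4<2(\omega\cdot k)^2$, then $|\omega\cdot k|>1/\sqrt 2$ (using $j\neq 0$), and the imaginary part gives
\begin{align*}
|\lambda_{k,j}|\geq (\epsilon+\epsilon^{\tfrac32}\mu)|\omega\cdot k|\geq \tfrac{\delta}{4\sqrt 2}.
\end{align*}
(Here the bound $|\omega|\leq 1$ from \eqref{f:bound} is what guarantees the hardest regime does occur rather than being empty, but it is harmless.) Combining the three cases yields a uniform lower bound $|\lambda_{k,j}|\geq c\,\delta$ for all $(k,j)$ with $j\neq 0$, where $c>0$ is absolute.

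With this spectral bound the conclusion is immediate: for $h=\sum_{k,j}h_{k,j}e^{\mathrm{i}(k\cdot\varphi+j\cdot x)}\in H^{\rho,s}_0$, the operator $\mathcal{D}^{-1}$ acts as multiplication by $\lambda_{k,j}^{-1}$ on Fourier coefficients, so
\begin{align*}
\|\mathcal{D}^{-1}h\|_{\rho,s}^{2}
=\sum_{k,\,j\neq 0}\frac{|h_{k,j}|^2}{|\lambda_{k,j}|^{2}}\,e^{2\rho(|k|+|j|)}\langle k,j\rangle^{2s}
\leq \frac{1}{c^{2}\delta^{2}}\|h\|_{\rho,s}^{2},
\end{align*}
giving \eqref{E:inverse2} with $K_0:=1/c$. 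I do not expect a genuine obstacle: the dominant balance $|j|^4$ versus $(\omega\cdot k)^2$ is dispatched cleanly by the trichotomy above, and no Diophantine or Liouvillean input is needed at this step precisely because the imaginary part of $\lambda_{k,j}$ supplies the required lower bound whenever the real part fails. The only point deserving care is tracking that $\epsilon+\epsilon^{3/2}\mu$ stays comparable to $\delta$, which is why the smallness threshold must be chosen after $K$ in \eqref{coe} is fixed.
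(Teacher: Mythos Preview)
Your argument is correct and follows the same strategy as the paper: diagonalize $\mathcal{D}$ in the Fourier basis, then obtain a uniform lower bound on $|\lambda_{k,j}|$ by using the real part $|j|^4-(\omega\cdot k)^2$ when it is large and falling back on the imaginary part $(\epsilon+\epsilon^{3/2}\mu)(\omega\cdot k)$ when it is not, the key observation being that $j\neq 0$ forces $|\omega\cdot k|$ to be bounded away from zero in the latter regime.

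Your trichotomy ($k=0$; $|j|^4\geq 2(\omega\cdot k)^2$; $|j|^4<2(\omega\cdot k)^2$) is in fact tidier than the paper's decomposition, which introduces an auxiliary threshold $J_0>1$ and splits first according to whether $|j|^4\leq J_0$ or $|j|^4>J_0$, then further according to whether $\varsigma=\omega\cdot k$ lies in certain intervals near $\pm|j|^2$. Both routes reach the same conclusion (indeed the paper notes one may take $K_0=10^6$), but your single ratio test $|j|^4\gtrless 2(\omega\cdot k)^2$ dispatches the dichotomy in one stroke and makes the role of the constraint $j\neq 0$ more transparent. The paper's version gains nothing from the extra parameter $J_0$, so your simplification is a genuine improvement in presentation while remaining the same proof in substance.
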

\begin{proof}
It is straightforward that for $h\in H^{\rho,s}_0$,
\begin{align}
\mathcal{D}h=\sum_{k\in\mathbb{Z}^{\nu}, j\in\mathbb{Z}^d}\Theta_\epsilon(\omega,j)h_{k, j}e^{\mathrm{i}(k\cdot\varphi+ j\cdot x)},\nonumber
\end{align}
where the symbol $\Theta_\epsilon(\omega,j)$ takes the form
\begin{align}
\Theta_\epsilon(\omega,j)=-(\omega\cdot k)^2+\lambda|j|^4+\mu|j|^2+\mathrm{i}(\epsilon^{2p}\iota(\omega\cdot k)+\epsilon(\omega\cdot k))\nonumber
\end{align}
with $\iota\equiv\iota(u)$. It remains to estimate the lower bounds of
\begin{align}
|\Theta_\epsilon(\varsigma,j)|^2=(\lambda|j|^4+\mu|j|^2-\varsigma^2)^2+(\epsilon^{2p}\iota\varsigma+\epsilon\varsigma)^2,\nonumber
\end{align}
where we set $\varsigma:=\omega\cdot k$.

In the subsequent analysis, a positive integer $J_0>1$ will be fixed. Additionally, it is important to observe that $j\neq0$ according to the definition of the space $H^{\rho,s}_0$. We are now in a position to discuss the following two cases.

$\mathbf{Case~a}$: $1\leq \lambda|j|^4+\mu|j|^2\leq J_0$. Observe that
\begin{align*}
\varsigma^2=\lambda|j|^4+\mu|j|^2\in[1,J_0].
\end{align*}
Define two regions in $\varsigma\in\mathbb{R}$ as follows:
\begin{align*}
I^{-}_1&:=[-(1+10^{-3}){J}^{\frac12}_0,-(1-10^{-3})],\nonumber\\
I^{+}_1&:=[1-10^{-3},(1+10^{-3})J^{\frac12}_0].
\end{align*}
For $\varsigma\in \mathbb{R}\setminus (I^{-}_1\cup I^{+}_1)$, one has
\begin{align}
\text{either}\quad 0<|\varsigma|< 1-10^{-3}\quad\text{or}\quad |\varsigma|> (1+10^{-3})J^{\frac12}_0.\nonumber
\end{align}
If $0<|\varsigma|< 1-10^{-3}$, then
\begin{align}
|\Theta_\epsilon(\varsigma,j)|^2\geq(\lambda|j|^4+\mu|j|^2-\varsigma^2)^2\geq10^{-6}. \nonumber
\end{align}
In the latter, one obtains that for $|\varsigma|> (1+10^{-3})J^{\frac12}_0$,
\begin{align}
|\Theta_\epsilon(\varsigma,j)|^2\geq(\lambda|j|^4+\mu|j|^2-\varsigma^2)^2\geq(((1+10^{-3})J_0^\frac12)^2-J_0)^2\geq
10^{-6}J_0^2.\nonumber
\end{align}
Moreover, we can derive that for $\varsigma\in I^{-}_1\cup I^{+}_1$,
\begin{align}
1-10^{-3}\leq|\varsigma|\leq(1+10^{-3})J^{\frac12}_0.\nonumber
\end{align}
Since $\|u\|_{\rho,s+\tau+6}\leq1$, using \eqref{coe} yields that $|\iota|\leq K$. Then we get that for $\delta^{2p-1}K\leq1/2$,
\begin{align} |\Theta_\epsilon(\varsigma,j)|^2\geq(\epsilon^{2p}\iota\varsigma+\epsilon\varsigma)^2\geq\epsilon^2\varsigma^2(1-\epsilon^{2p-1}|\iota|)^2
\geq\frac{1}{16}\delta^2(1-10^{-3})^2.\nonumber
\end{align}

$\mathbf{Case~b}$:  $\lambda|j|^4+\mu|j|^2> J_0$. One has that for $\left|\lambda|j|^4+\mu|j|^2-\varsigma^2\right|\geq\frac12J_0$,
\begin{align}
|\Theta_\epsilon(\varsigma,j)|^2\geq(\lambda|j|^4+\mu|j|^2-\varsigma^2)^2\geq\frac{1}{4}J^2_0.\nonumber
\end{align}
On the other hand, it follows from the inequality $\left|\lambda|j|^4+\mu|j|^2-\varsigma^2\right|<\frac12 J_0$ that
\begin{align}
\frac12 J_0<\lambda|j|^4+\mu|j|^2-\frac12 J_0\leq\varsigma^2\leq\lambda|j|^4+\mu|j|^2+\frac12 J_0.\nonumber
\end{align}
Therefore, we have that for $\delta^{2p-1}K\leq1/2$,
\begin{align} |\Theta_\epsilon(\varsigma,j)|^2\geq(\epsilon^{2p}\iota\varsigma+\epsilon\varsigma)^2\geq\frac{1}{16}\delta^2\varsigma^2\geq\frac{1}{32}\delta^2 J_0.\nonumber
\end{align}
As a result, the operator $\mathcal{D}:H^{\rho,s}_0\rightarrow H^{\rho,s}_0$ is invertible.

Furthermore, we can deduce
\begin{align}
\sum_{k\in\mathbb{Z}^{\nu}, j\in\mathbb{Z}^d}{|h_{k,j}|^2}{|\Theta_\epsilon(\omega,j)|^{-2}} e^{2\rho(|k|+|j|)}\langle k,j\rangle^{2s}\leq K_0^2\delta^{-2}\|h\|^2_{\rho,s}<\infty,\nonumber
\end{align}
where $K_0$ may be taken as $10^6$ and the Fourier coefficients $h_{0,0}=0$.

The proof is complete.
\end{proof}

For convenience,  we write
\begin{align}\label{Ope:L}
\mathcal{L}_{\epsilon,\omega}:=(\omega\cdot\nabla_\varphi)^2+\lambda\Delta^2-\mu\Delta+\epsilon(\omega\cdot\nabla_{\varphi}).
\end{align}
The similar procedure as in the proof of Lemma \ref{le:Dinverse} also provides the invertibility of $\mathcal{L}_{\epsilon,\omega}$.
\begin{lemm}
For $\rho>0$ and $s>0$, if $u$ and $\epsilon$ satisfy the conditions of Lemma \ref{le:Dinverse}, then the operator $\mathcal{L}_{\epsilon,\omega}: H^{\rho,s}_0\rightarrow H^{\rho,s}_0$  is invertible with
\begin{align}\label{E:inverse3}
\|\mathcal{L}^{-1}_{\epsilon,\omega}h\|_{\rho, s}\leq K_0\delta^{-1}\|h\|_{\rho,s},
\end{align}
where $K_0$ can be founded in \eqref{E:inverse2}.
\end{lemm}

Our next goal is to present the invertibility of $\tilde{\mathcal{L}}(\epsilon,\omega,u)$ via Neumann series arguments.

\begin{lemm}\label{le:L2}
Let $\rho>0, s\geq s_0$, and $p\in\left(\frac{1}{2},1\right)$. Suppose that $\|u\|_{\rho,s+\tau+6}\leq1$ for fixed $\tau>\nu-1$. Then there is $\kappa_1\in(0,\kappa_0)$ small enough satisfying that for any $\delta>0$ with $\delta^{2p-1}\gamma\leq\kappa_1$, when $\epsilon$ is in $\Lambda_{\delta}$,  the operator $\tilde{\mathcal{L}}(\epsilon,\omega,u)$ is invertible from  $H^{\rho,s}_0$ to $H^{\rho,s}_0$  with
\begin{align}
\|\tilde{\mathcal{L}}^{-1}(\epsilon,\omega,u)h\|_{\rho,s}\leq 2K_0\delta^{-1}\|h\|_{\rho,s},\nonumber
\end{align}
where $K_0$ is defined in \eqref{E:inverse2}.
\end{lemm}

\begin{proof}
According to Lemma \ref{le:Dinverse}, we decompose
\begin{align}		\tilde{\mathcal{L}}(\epsilon,\omega,u)h=\mathcal{D}h+\tilde{\mathcal{R}}h=\mathcal{D}(\mathrm{Id}+\mathcal{D}^{-1}\tilde{\mathcal{R}})h.\nonumber
\end{align}
For sufficiently small  $\delta^{2p-1}\gamma\leq\kappa_1$, it follows from \eqref{R2} and \eqref{E:inverse2} that
\begin{align*}
\|\mathcal{D}^{-1}\tilde{\mathcal{R}}h\|_{\rho,s}&\leq\delta^{-1} K_0\|\tilde{\mathcal{R}}h\|_{\rho,s}\\
&\leq\delta^{2p-1}\gamma K_0\tilde{K}\|u\|^2_{\rho,s+\tau+6}\|h\|_{\rho,s}\\
&\leq\frac{1}{2}\|h\|_{\rho,s}.
\end{align*}
Then the Neumann series guarantees the invertibility of $\mathrm{Id}+\mathcal{D}^{-1}\tilde{\mathcal{R}}$ with
\begin{align}
\|(\mathrm{Id}+\mathcal{D}^{-1}\tilde{\mathcal{R}})^{-1}h\|_{\rho,s}\leq
2\|h\|_{\rho,s}.\nonumber
\end{align}
As a result,
\begin{align*}		\|\tilde{\mathcal{L}}^{-1}(\epsilon,\omega,u)h\|_{\rho,s}&=\|(\mathrm{Id}+\mathcal{D}^{-1}\tilde{\mathcal{R}})^{-1}\mathcal{D}^{-1}h\|_{\rho,s}\nonumber\\
&\leq2\|\mathcal{D}^{-1}h\|_{\rho,s}\nonumber\\
&\leq2K_0\delta^{-1}\|h\|_{\rho,s},\nonumber
\end{align*}
which completes the proof of the lemma.
\end{proof}

As a consequence, as stated in the following proposition, we give  the invertibility of the linearized  operator $\mathcal{L}(\epsilon,\omega,u)$.

\begin{prop}\label{le:L}
Under the hypotheses of Lemma  \ref{le:L2}, the operator  $\mathcal{L}(\epsilon,\omega,u): H^{\rho,s}_0\rightarrow H^{\rho,s}_0$ is invertible with
\begin{align}
\mathcal{L}^{-1}(\epsilon,\omega,u): H^{\rho,s}_0\rightarrow H^{\rho,s}_0\nonumber
\end{align}
and there exists some constant ${K}_1>0$ such that for $h\in H^{\rho,s}_0$,
\begin{align}\label{L:K1}
\|\mathcal{L}^{-1}(\epsilon,\omega,u)h\|_{\rho,s}\leq{K}_1\delta^{-1}\|h\|_{\rho,s}.
\end{align}
\end{prop}

\begin{proof}
Applying Proposition \ref{prop:change} and Lemma \ref{le:L2}, the operators $\mathcal{A}$, $\mathcal{A}^{-1}$, and $\tilde{\mathcal{L}}(\epsilon,\omega,u)$ are all invertible. 
From \eqref{E:identity} we obtain the factorization
\begin{align}
\mathcal{L}^{-1}(\epsilon,\omega,u)=\mathcal{A}\circ\tilde{\mathcal{L}}^{-1}(\epsilon,\omega,u)\circ\mathcal{A}^{-1}.\nonumber
\end{align}
Therefore, we can derive the estimate 
\begin{align*}
\|\mathcal{L}^{-1}(\epsilon,\omega,u)h\|_{\rho,s}
&\leq K\|\tilde{\mathcal{L}}^{-1}(\epsilon,\omega,u)\circ\mathcal{A}^{-1}h\|_{\rho,s}\\
&\leq2KK_0\delta^{-1}\|\mathcal{A}^{-1}h\|_{\rho,s}\\
&\leq{K}_1\delta^{-1}\|h\|_{\rho,s}.
\end{align*}
The proof is now complete.
\end{proof}

%
%
%
\section{Proof of the main theorem}\label{sec:3}

In this section we first construct the Nash--Moser iteration scheme,  and then demonstrate the existence of the response solution to the equivalent equation \eqref{E:Func} of equation \eqref{kirchhoff}. Finally,  we give the proof of the main theorem \ref{main-result}.

\subsection{Solvability of equation (2.5)}

The goal of this subsection is devoted to constructing a Nash--Moser iteration scheme to solve equation \eqref{E:Func}.

Denote by $\mathbb{N}$ the set composed of non-negative integers. Let $n\in\mathbb{N}$ be a non-negative integer.  Set
\begin{align}\label{Nn}
N_n:=N_0 2^n,
\end{align}
where $N_0$ will be fixed in Lemma \ref{le:iterative}. Consider the increasing sequence of finite-dimensional subspaces as follows:
\begin{align}
\textstyle H_{n}:=\left\{u(\varphi,x)=\sum_{\langle k,j\rangle\leq N_n}u_{k,j}e^{\mathrm{i}(k\cdot \varphi+j\cdot x)}\in H^{\rho,0}_0\right\}\subset H^{\rho,s}_0.\nonumber
\end{align}
Correspondingly, we set
\begin{align}
\textstyle H^{\bot}_{n}:=\left\{u(\varphi,x)=\sum_{\langle k,j\rangle> N_n}u_{k,j}e^{\mathrm{i}(k\cdot \varphi+j\cdot x)}\in H^{\rho,0}_0\right\}\subset H^{\rho,s}_0.\nonumber
\end{align}
Denote by $\mathcal{P}_{n}$ and $\mathcal{P}^{\bot}_{n}$ the projectors on  $H_{n}$ and $ H^{\bot}_{n}$, respectively. For all $\rho>0,\tilde{\rho}\geq0,s>0,\tilde{s}\geq0$, the operator $\mathcal{P}_{n}$ possesses the smoothing properties
\begin{align}
\|\mathcal{P}_{n}u\|_{\rho+\tilde{\rho},s+\tilde{s}}\leq e^{2\tilde{\rho}N_n}N^{\tilde{s}}_n\|u\|_{\rho,s},\quad\forall u\in H^{\rho+\tilde{\rho},s+\tilde{s}}_0.\nonumber
\end{align}
The operator $\mathcal{P}^{\perp}_{n}$ satisfy that for $0<\tilde{\rho}\leq\rho$ and $s>0$,
\begin{align}
\|\mathcal{P}^{\perp}_{n}u\|_{\tilde{\rho},s}\leq e^{-(\rho-\tilde{\rho})N_n}\|u\|_{\rho,s},\quad\forall u\in H^{\rho,s}_0.\nonumber
\end{align}
Moreover, denote by $\rho_0$  the width of the complex strip at initialization. We define the sequence by
\begin{align}\label{sequence}
\rho_{n+1}:=\rho_n-\frac{\theta}{(n+1)^2},
\end{align}
where $\theta=3\rho_0/\pi^2$; see also \cite{Baldi2008Forced}. It is clear that
\begin{align*}
\rho_0>\rho_1>\rho_2>\rho_3>\cdots>{\rho_0}/{2}.
\end{align*}

We are in a position to introduce the inductive lemma.

\begin{lemm}[Inductive lemma]\label{le:iterative}
Let $\rho_0>0,s\geq s_0$, and $p\in\left(\frac{1}{2},1\right)$.  
 For all $n\in\mathbb{N}$, for some $\kappa\in(0,1)$ small enough satisfying that for any $\delta>0$ with $\delta^{2p-1}\gamma\leq\kappa$, when $\epsilon\in\Lambda_{\delta}$, then we can construct a sequence $u_{n}(\epsilon,\omega)\in H_{n}$, which is a solution of equation
\begin{align}
\mathcal{P}_n\mathcal{F}(\epsilon,\omega,u)=0,\nonumber
\end{align}
namely,
\begin{align}\label{t-equation}
\mathcal{L}_{\epsilon,\omega}u+\epsilon^{2p}\alpha\mathcal{P}_{n}F(u)-\epsilon^{2-p}\mathcal{P}_ng=0, \tag{$\mathcal{P}_{n}$}
\end{align}
and satisfies that for some fixed $\tau>\nu-1$,
\begin{align}\label{prior}
\|u_{n}\|_{\rho_{n+1},s+\tau+6}\leq1,
\end{align}
and
\begin{align}
\textstyle\|u_{0}\|_{\rho_0,s}&\leq \delta^{{\min\{2p-1,1-p\}}}\bar\Theta N^{12}_0,\label{initial}\\
\|u_l-u_{l-1}\|_{\rho_l,s}&\leq \delta^{{\min\{2p-1,1-p\}}}\bar\Theta N^{12}_{0}e^{-\chi^{l}}, \quad\forall1\leq l\leq n,\label{iteration}
\end{align}
where $ \chi$ is taken as $3/2$ and $\bar \Theta$ is a positive constant depending on $s$ and $g$, provided by
\begin{align}
\bar\Theta=(K_0+2K_1)(\tilde{C}(s)+\|g\|_{\rho_0,s}),\nonumber
\end{align}
where $K_0$ and $K_1$ are given by \eqref{E:inverse2} and \eqref{L:K1}, respectively.
\end{lemm}

\begin{proof}
The proof proceeds by induction with two main steps:
	
$\textbf{Step 1: Initialization.}$  From formula \eqref{E:inverse3}, solving equation $(\mathcal{P}_{0})$ is transformed into the fixed point problem of $u=\mathcal{U}_0(u)$, where	
\begin{align*}
\textstyle \mathcal{U}_0:H_{0}&\rightarrow H_{0},\\
u&\mapsto \mathcal{L}^{-1}_{\epsilon,\omega}(-\epsilon^{2p}\alpha\mathcal{P}_0F(u)+\epsilon^{2-p}\mathcal{P}_0g),
\end{align*}
where $\mathcal{L}_{\epsilon,\omega}$ is defined by \eqref{Ope:L}.

In the following lemma, we check that $\mathcal{U}_0$ is a contraction mapping.

\begin{lemm}
Let $\rho_0>0,s\geq s_0$, and $p\in\left(\frac{1}{2},1\right)$. For fixed $\gamma>\nu-1$, there exists $\kappa_2\in(0,\kappa_1)$ sufficiently small such that for any $\delta>0$ with $\delta^{\min\{2p-1,1-p\}}\gamma\leq\kappa_2$, when $\epsilon$ is in $\Lambda_{\delta}$, the mapping $\mathcal{U}_0$ is a contraction in the set $\mathcal{B}(0,\eta_0)$ given by
\begin{align}
\mathcal{B}(0,\eta_0):=\left\{u\in H_{0}:\|u\|_{\rho_0,s}\leq \eta_0\right\}\nonumber
\end{align}
with $\eta_0:=\delta^{\min\{2p-1,1-p\}}\bar\Theta N^{12}_0$.
\end{lemm}
\begin{proof}
It follows from \eqref{E:nonlinearity1} and \eqref{E:inverse3} that for $\delta^{\min\{2p-1,1-p\}}\gamma\leq\kappa_2$ small enough,
\begin{align*}
\|\mathcal{U}_0(u)\|_{\rho_0,s}
&\leq \delta^{-1} K_0\|-\epsilon^{2p}\alpha\mathcal{P}_0F(u)+\epsilon^{2-p}\mathcal{P}_0g\|_{\rho_0,s}\\
&\leq\delta^{-1}K_0(\delta^{2p} \tilde{C}(s)\|u\|^3_{\rho_0,s+4}+\delta^{2-p}\|\mathcal{P}_0g\|_{\rho_0,s})\\
&\leq\delta^{\min\{2p-1,1-p\}} K_0N^{12}_0(\tilde{C}(s)\|u\|^3_{\rho_0,s}+\|g\|_{\rho_0,s})\\
&\leq\delta^{\min\{2p-1,1-p\}}\bar\Theta N^{12}_0.
\end{align*}
Moreover, observe that
\begin{align*}
\mathrm{D}\mathcal{U}_0(u)[h]&=\frac{\mathrm{d}}{\mathrm{d}\xi}
\mathcal{U}_0(u+\xi h)\Big|_{\xi=0}\nonumber\\
&=\frac{\mathrm{d}}{\mathrm{d}\xi}\mathcal{L}^{-1}_{\epsilon,\omega}(-\epsilon^{2p}\alpha\mathcal{P}_0F(u+\xi h)+\epsilon^{2-p}\mathcal{P}_0g)\Big|_{\xi=0}\nonumber\\
&=\lim\limits_{\xi \rightarrow 0}\frac{1}{\xi} \left[\mathcal{L}^{-1}_{\epsilon,\omega}(-\epsilon^{2p}\alpha\mathcal{P}_0F(u+\xi h)+\epsilon^{2-p}\mathcal{P}_0g) -\mathcal{L}^{-1}_{\epsilon,\omega}(-\epsilon^{2p}\alpha\mathcal{P}_0F(u)+\epsilon^{2-p}\mathcal{P}_0g) \right]\nonumber\\
&=-\epsilon^{2p}\alpha\mathcal{L}^{-1}_{\epsilon,\omega}\mathcal{P}_0(\mathrm{D}F(u)[h]).\nonumber
\end{align*}
Combining this with \eqref{E:nonlinearity2} gives that
\begin{align*}
\|\mathrm{D}\mathcal{U}_0(u)[{h}]\|_{\rho_0,s}
&\leq\delta^{2p-1} \alpha K_0\|\mathcal{P}_0(\mathrm{D}F(u)[{h}])\|_{\rho_0,s}\nonumber\\
&\leq\delta^{2p-1}\bar\Theta N^{12}_0\|u\|^2_{\rho_0,s}\|{h}\|_{\rho_0,s}\nonumber\\
&\leq\frac12\|{h}\|_{\rho_0,s}.\nonumber
\end{align*}
Therefore,  the mapping $\mathcal{U}_0$ is a contraction in $\mathcal{B}(0,\eta_0)$, thereby completing the proof.
\end{proof}
	
For the convenience of describing the problem, denote by $u_0$ the unique solution of equation $(\mathcal{P}_{0})$ in $\mathcal{B}(0,\eta_0)$. Additionally, there exists $\kappa_3\in(0,\kappa_2)$  sufficiently small  such that if $\delta^{\min\{2p-1,1-p\}}\gamma\leq\kappa_3$, then
\begin{align}
\|u_0\|_{\rho_1,s+\tau+6}\stackrel{\eqref{1}}{\leq} \frac{1}{\theta^{\tau+6}}\big(\frac{\tau+6}{e}\big)^{\tau+6} \|u_0\|_{\rho_0,s}\stackrel{\eqref{initial}}{\leq}\delta^{\min\{2p-1,1-p\}}\frac{1}{\theta^{\tau+6}}\big(\frac{\tau+6}{e}\big)^{\tau+6}\bar\Theta N^{12}_0\leq1,\nonumber
\end{align}
which implies that \eqref{prior} holds for $n=0$.
	
Thus, we complete the proof of initialization.
	
$\textbf{Step 2: Iteration.}$ Suppose that we could obtain a solution $u_{n}\in H_n$ of equation \eqref{t-equation} satisfying formulas \eqref{prior}--\eqref{iteration}.
	
Our final objective is to look for a solution $u_{n+1}\in H_{n+1}$ of equation
\begin{align}\label{Pn+1}
\mathcal{L}_{\epsilon,\omega}u+\epsilon^{2p}\alpha\mathcal{P}_{n+1}F(u)-\epsilon^{2-p}\mathcal{P}_{n+1}g=0 \tag{$\mathcal{P}_{n+1}$}
\end{align}
satisfying \eqref{prior} and \eqref{iteration} at the ($n+1$)-th step.

Denote by
\begin{align}
u_{n+1}=u_{n}+h_{n+1},\nonumber
\end{align}
\text{with} $h_{n+1}\in H_{{n+1}}$, a solution of equation \eqref{Pn+1}. It follows from equation \eqref{t-equation} that
\begin{align*}		&\mathcal{L}_{\epsilon,\omega}(u_{n}+h_{n+1})+\epsilon^{2p}\alpha\mathcal{P}_{n+1}F(u_{n}+h_{n+1})-\epsilon^{2-p}\mathcal{P}_{n+1}g\\	&=\mathcal{L}_{\epsilon,\omega}u_{n}+ \mathcal{L}_{\epsilon,\omega} h_{n+1}+\epsilon^{2p}\alpha\mathcal{P}_{n+1}F(u_{n}+h_{n+1})-\epsilon^{2-p}\mathcal{P}_{n+1}g\\
&=\mathcal{L}_{n+1}(\epsilon,\omega,u_n)h_{n+1}+R_{n}(h_{n+1})+r_{n},
\end{align*}
where
\begin{align}
\mathcal{L}_{n+1}(\epsilon,\omega,u_n)h_{n+1}&:=\mathcal{L}_{\epsilon,\omega}h_{n+1} +\epsilon^{2p}\alpha \mathcal{P}_{n+1}(\mathrm{D}F(u_n)[h_{n+1}]),\nonumber\\	R_{n}(h_{n+1})&:=\epsilon^{2p}\alpha\mathcal{P}_{n+1}(F(u_n+h_{n+1})-F(u_n)-\mathrm{D}F(u_n)[h_{n+1}]),\nonumber\\	r_{n}&:=\epsilon^{2p}\alpha\mathcal{P}_{n+1}\mathcal{P}^{\perp}_{n}F(u_n)-\epsilon^{2-p}\mathcal{P}_{n+1}\mathcal{P}^{\perp}_{n}g.\nonumber
\end{align}
The following corlllary addresses the invertibility of $\mathcal{L}_{n+1}(\epsilon,\omega,u_n)$.
\begin{coro}
Given that the assumption \eqref{prior} holds at the $n$-th step, the truncated operator $\mathcal{L}_{n+1}(\epsilon,\omega,u_n)$ is invertible with
\begin{align}\label{L:inverse1}
\|\mathcal{L}^{-1}_{n+1}(\epsilon,\omega,u_n)h_{n+1}\|_{\rho_{n+1},s}
\leq{K}_1\delta^{-1}\|h_{n+1}\|_{\rho_{n+1},s}
\end{align}
for $h_{n+1}\in H_{n+1}$.
\end{coro}
\begin{proof}
Formula \eqref{L:inverse1} can be obtained from Proposition \ref{le:L}.
\end{proof}

According to  \eqref{L:inverse1},  we reformulate solving equation \eqref{Pn+1} as  the fixed point problem of
\begin{align*}
h_{n+1}=\mathcal{U}_{n+1}(h_{n+1}),
\end{align*}
where
\begin{align*}
\mathcal{U}_{n+1}:H_{n+1} &\rightarrow H_{n+1},\\
h_{n+1}&\mapsto -\mathcal{L}^{-1}_{n+1}(\epsilon,\omega,u_n)(R_{n}(h_{n+1})+r_{n}).
\end{align*}

The following lemma demonstrates that the mapping $\mathcal{U}_{n+1}$ is indeed a contraction.
\begin{lemm}
Let $\rho_0>0,s\geq s_0$, and $p\in\left(\frac{1}{2},1\right)$. Then for fixed $\gamma>\nu-1$, there exists  $\kappa_4\in(0,\kappa_3)$ small enough such that for any $\delta>0$  with $\delta^{\min\{2p-1,1-p\}}\gamma\leq\kappa_4$, when $\epsilon$ belongs to $\Lambda_{\delta}$,  the mapping $\mathcal{U}_{n+1}$ is a contraction in
\begin{align}
\mathcal{B}(0,\eta_{n+1}):=\left\{h_{n+1}\in H_{n+1}:\|h_{n+1}\|_{\rho_{n+1},s}\leq \eta_{n+1}\right\},\nonumber
\end{align}
where
\begin{align*}
\eta_{n+1}=\delta^{\min\{2p-1,1-p\}}\Theta N^{12}_{0}e^{-\chi^{n+1}}.
\end{align*}
\end{lemm}
\begin{proof}
Combining \eqref{initial} with the assumption \eqref{iteration} holds up to the $n$-th step, it can be derived that $\|u_n\|_{\rho_{n},s}\leq1$. Then the definition of the norm $\|\cdot\|_{\rho,s}$ implies that
\begin{align}
\|u_n\|_{\rho_{n+1},s}\leq1.\nonumber
\end{align}
Therefore, thanks to  \eqref{E:nonlinearity3} and the expression of $R_{n}(h_{n+1})$, we can deduce that for $\|h_{n+1}\|_{\rho_{n+1},s}\leq1$,
\begin{align*}			\|R_{n}(h_{n+1})\|_{\rho_{n+1},s}&\leq\delta^{2p}\alpha\|F(u_n+h_{n+1})-F(u_n)-\mathrm{D}F(u_n)[h_{n+1}]\|_{\rho_{n+1},s}\nonumber\\			&\leq\delta^{2p}\tilde{C}(s)(\|h_{n+1}\|^2_{\rho_{n+1},s+4}\|u_n\|_{\rho_{n+1},s+4}+\|h_{n+1}\|^3_{\rho_{n+1},s+4})\nonumber\\		&\leq\delta^{2p}\tilde{C}(s)(N^8_{n+1}\|h_{n+1}\|^2_{\rho_{n+1},s}N^4_n\|u_n\|_{\rho_{n+1},s}+N^{12}_{n+1}\|h_{n+1}\|^3_{\rho_{n+1},s})\nonumber\\
&\leq2\delta^{2p}\tilde{C}(s)N^{12}_{n+1}\|h_{n+1}\|^2_{\rho_{n+1},s}\nonumber.
\end{align*}
Moreover, using  \eqref{E:nonlinearity1} and \eqref{sequence}  yields that
\begin{align*}
\|\epsilon^{2p}\alpha\mathcal{P}_{n+1}\mathcal{P}^{\perp}_{n}F(u_n)\|_{\rho_{n+1},s}&\leq\delta^{2p}\alpha \|\mathcal{P}^{\perp}_{n}F(u_n)\|_{\rho_{n+1},s}\\
&\leq\delta^{2p} \tilde{C}(s)e^{-\frac{\theta}{(n+1)^2}N_n}\|u_n\|^3_{\rho_n,s+4}\\
&\leq\delta^{2p} \tilde{C}(s)N^{12}_ne^{-\frac{\theta}{(n+1)^2}N_n}
\end{align*}
and
\begin{align*}
\|\epsilon^{2-p}\mathcal{P}_{n+1}\mathcal{P}^{\perp}_{n}g\|_{\rho_{n+1},s}
&\leq\delta^{2-p}\|\mathcal{P}^{\perp}_{n}g\|_{\rho_{n+1},s}\\
&\leq\delta^{2-p} e^{-\frac{\theta}{(n+1)^2}N_n}\|g\|_{\rho_0,s}.
\end{align*}
The two inequalities lead to
\begin{align}
\|r_n\|_{\rho_{n+1},s}		
\leq\delta^{\min\{2p,2-p\}}(\delta^{2p-{\min\{2p,2-p\}}}\tilde{C}(s)+\delta^{2-p-{\min\{2p,2-p\}}}\|g\|_{\rho_0,s})N^{12}_{n}e^{-\frac{\theta}{(n+1)^2}N_n}.\nonumber
\end{align}
Following the above calculation and applying \eqref{L:inverse1}, we have
\begin{align*}
\|\mathcal{U}_{n+1}(h_{n+1})\|_{\rho_{n+1},s}
&\leq{K}_1\delta^{-1}\|R_{n}(h_{n+1})\|_{\rho_{n+1},s}+
{K}_1\delta^{-1}\|r_{n}\|_{\rho_{n+1},s}\\
&\leq2\delta^{2p-1}{K}_1\tilde{C}(s)N^{12}_{n+1}\|h_{n+1}\|^2_{\rho_{n+1},s}+\delta^{\min\{2p-1,1-p\}} {K}_1(\delta^{2p-{\min\{2p,2-p\}}}\tilde{C}(s)\\
&\quad+\delta^{2-p-{\min\{2p,2-p\}}}\|g\|_{\rho_0,s})N^{12}_{n}e^{-\frac{\theta}{(n+1)^2}N_n}.
\end{align*}
If $\frac{1}{2}<p<\frac{2}{3}$, then $\min\{2p,2-p\}=2p$, which leads to
\begin{align*}
2p-\min\{2p,2-p\}=0,\quad 2-p-\min\{2p,2-p\}>0.
\end{align*}
When $\frac{2}{3}\leq p<1$, one has $\min\{2p,2-p\}=2-p$. It is straightforward that
\begin{align*}
2p-\min\{2p,2-p\}\geq0,\quad 2-p-\min\{2p,2-p\}=0.
\end{align*} 
Since
\begin{align}
{K}_1(\delta^{2p-{\min\{2p,2-p\}}}\tilde{C}(s)+\delta^{2-p-{\min\{2p,2-p\}}}\|g\|_{\rho_0,s})
<\frac{1}{2}\bar\Theta,\nonumber
\end{align}
it follows from \eqref{Nn} and \eqref{sequence} that there exists some $N_0>0$ large enough,
\begin{align}
&2\delta^{2p-1}{K}_1\tilde{C}(s)N^{12}_{n+1}{\eta^2_{n+1}}\leq\frac12\eta_{n+1},\label{eta_n+1}\\
&\delta^{\min\{2p-1,1-p\}} {K}_1(\delta^{2p-{\min\{2p,2-p\}}}\tilde{C}(s)+\delta^{2-p-{\min\{2p,2-p\}}}\|g\|_{\rho_0,s})N^{12}_{n}e^{-\frac{\theta}{(n+1)^2}N_n}\leq\frac12\eta_{n+1}.\nonumber
\end{align}
This shows that $\|\mathcal{U}_{n+1}(h_{n+1})\|_{\rho_{n+1},s}\leq \eta_{n+1}$.
		
Additionally, by means of the expression of $R_{n}(h_{n+1})$, we obtain
\begin{align*}
&R_{n}(h_{n+1}+\xi \mathrm{w})-R_{n}(h_{n+1})\\
&=\epsilon^{2p}\alpha\mathcal{P}_{n+1}(F(u_n+h_{n+1}+\xi \mathrm{w})-F(u_n+h_{n+1})-\mathrm{D}F(u_n)[\xi \mathrm{w}]).
\end{align*}		
By taking the derivative  of  $\mathcal{U}_{n+1}$ with respect to $h$, one has
\begin{align*}
\mathrm{D}\mathcal{U}_{n+1}(h_{n+1})[\mathrm{w}]&=\frac{\mathrm{d}}{\mathrm{d}\xi}
\mathcal{U}_{n+1}(h_{n+1}+\xi \mathrm{w})\Big|_{\xi=0}\\
&=\frac{\mathrm{d}}{\mathrm{d}\xi}
(-\mathcal{L}^{-1}_{n+1}(\epsilon,\omega,u_n)(R_{n}(h_{n+1}+\xi \mathrm{w})+r_{n}))\Big|_{\xi=0}\\
&=-\lim\limits_{\xi \rightarrow0}\frac{1}{\xi}\mathcal{L}^{-1}_{n+1}(\epsilon,\omega,u_n)(R_{n}(h_{n+1}+\xi \mathrm{w})-R_{n}(h_{n+1}))\\	&=\epsilon^{2p}\alpha\mathcal{L}^{-1}_{n+1}(\epsilon,\omega,u_n)\mathcal{P}_{n+1}(-\mathrm{D}F(u_n+h_{n+1})[\mathrm{w}]+\mathrm{D}F(u_n)[\mathrm{w}]).
\end{align*}
Because of \eqref{E:nonlinearity2}, \eqref{L:inverse1}, and \eqref{eta_n+1}, we get
\begin{align*}
\|\mathrm{D}\mathcal{U}_{n+1}(h_{n+1})[\mathrm{w}]\|_{\rho_{n+1},s}	
&\leq\delta^{2p-1}{K}_1\tilde{C}(s)N^{12}_{n+1}\|h_{n+1}\|_{\rho_{n+1},s}\|\mathrm{w}\|_{\rho_{n+1},s}\\
&\leq\delta^{2p-1}{K}_1\tilde{C}(s)N^{12}_{n+1}\eta_{n+1}\|\mathrm{w}\|_{\rho_{n+1},s}\\
&\leq\frac12\|\mathrm{w}\|_{\rho_{n+1},s}.
\end{align*}
Hence, $\mathcal{U}_{n+1}$ is a contraction in $\mathcal{B}(0,\eta_{n+1})$.
\end{proof}

The remainder of the proof is to verify formula \eqref{prior} at the $(n+1)$-th step. Let us define
\begin{align*}
C(\tau):=\big(\frac{\tau+6}{e}\big)^{\tau+6}\frac{1}{\theta^{\tau+6}}\big(1+\sum^{+\infty}_{l=1}(l+1)^{2(\tau+6)}e^{-\chi^{l}}\big).
\end{align*}
According to \eqref{sequence}, \eqref{initial}, \eqref{iteration}, and \eqref{1}, there is $\tau\in(0,\tau_4)$ such that $\delta^{\min\{2p-1,1-p\}}\gamma\leq\kappa\leq\kappa_4$ small enough,  the following holds:
\begin{align*}
\|u_{n+1}\|_{\rho_{n+2},s+\tau+6}
&\leq\|u_0\|_{_{\rho_{n+2},s+\tau+6}}+\sum^{n+1}_{l=1}\|h_l\|_{\rho_{n+2},s+\tau+6}\\
&\leq\big(\frac{\tau+6}{e}\big)^{\tau+6}(\frac{\|u_0\|_{\rho_0,s}}{(\rho_0-\rho_1)^{\tau+6}}+\sum^{n+1}_{l=1}\frac{\|h_l\|_{\rho_{l},s}}{(\rho_{l}-\rho_{l+1})^{\tau+6}})\\
&\leq C(\tau)\delta^{\min\{2p-1,1-p\}}\bar\Theta N^{12}_0\\
&\leq1.
\end{align*}
The proof of Lemma \ref{le:iterative} is complete.
\end{proof}

\subsection{Proof of Theorem 2.2}

From the inductive lemma \ref{le:iterative}, we can construct the quasi-periodic solution with the  frequency $\omega$ for equation \eqref{E:Func}, namely, we complete the proof of Theorem \ref{main-result}..
\begin{proof}
In terms of  Lemma \ref{le:iterative}, we define
\begin{align*}
u:=u_0+\sum^{\infty}_{n=1}h_n.
\end{align*}
Therefore, equation \eqref{kirchhoff1} possesses a solution $u\in H^{{\rho_0}/{2},s}_0$ with zero average, which is quasi-periodic in time.
	
The proof is complete.
\end{proof}

\appendix
\section{Auxiliary results}
  The purpose of  the Appendix is devoted to introducing some technical lemmas. The following lemma corresponds to the properties of the composition operator.
\begin{lemm}\label{le:composition}
Let $\rho>0$ and $s>{\nu}/{2}$. For $u\in H^{\rho,s+4}_0$, define the mapping
\begin{align}
 F: \quad H^{\rho,s+4}_0\rightarrow {H}^{\rho,s}_0,\quad u\mapsto(\omega\cdot\nabla_{\varphi})u(\lambda\int_{\mathbb{T}^d}|\Delta u|^2 \mathrm{d}x+\int_{\mathbb{T}^d}|(\omega\cdot\nabla_{\varphi}) u|^2 \mathrm{d}x).\nonumber
\end{align}
Then $F$ is analytic from $H^{\rho,s+4}_0$ to $H^{\rho,s}_0$ with
\begin{align}
\mathrm{D}F(u)[h]=&\Big(2\lambda\int_{\mathbb{T}^d}\Delta u\cdot\Delta h \mathrm{d}x+2\int_{\mathbb{T}^d}(\omega\cdot\nabla_{\varphi}) u\cdot(\omega\cdot\nabla_{\varphi}) h \mathrm{d}x\Big)(\omega\cdot\nabla_{\varphi})u\nonumber\\
&+\Big(\lambda\int_{\mathbb{T}^d}|\Delta u|^2 \mathrm{d}x+\int_{\mathbb{T}^d}|(\omega\cdot\nabla_{\varphi}) u|^2 \mathrm{d}x\Big)(\omega\cdot\nabla_{\varphi})h.\nonumber
\end{align}

Furthermore, there exists some constant $C(s)>0$ such that for all $u,h\in H^{\rho,s+4}_0$,
\begin{align}\label{E:nonlinearity1}
\|F(u)\|_{\rho,s}&\leq C(s)\|u\|^3_{\rho,s+4},\\
\|\mathrm{D}F(u)[h]\|_{\rho,s}&\leq C(s)\|u\|^2_{\rho,s+4}\|h\|_{\rho,s+4},\label{E:nonlinearity2}\\
\|F(u+h)-F(u)-\mathrm{D}F(u)[h]\|_{\rho,s}&\leq C(s)(\|u\|_{\rho,s+4}\|h\|^2_{\rho,s+4}+\|h\|^3_{\rho,s+4}).\label{E:nonlinearity3}
\end{align}
\end{lemm}
\begin{proof}
Clearly, $F$ is an analytic mapping from $H^{\rho,s+4}_0$ to  $H^{\rho,s}_0$.  Moreover, notice that
\begin{align*}
\mathrm{D}F(u)[h]&=\frac{\mathrm{d}}{\mathrm{d}\xi}F(u+\xi h)\Big|_{\xi=0}\\
&=\frac{\mathrm{d}}{\mathrm{d}\xi}(\omega\cdot\nabla_{\varphi})(u+\xi h)(\lambda\int_{\mathbb{T}^d}|\Delta (u+\xi h)|^2 \mathrm{d}x+\int_{\mathbb{T}^d}|(\omega\cdot\nabla_{\varphi}) (u+\xi h)|^2 \mathrm{d}x)\Big|_{\xi=0}\\
&=\lim\limits_{\xi \rightarrow0}\frac{1}{\xi}((\omega\cdot\nabla_{\varphi})(u+\xi h)(\lambda\int_{\mathbb{T}^d}|\Delta (u+\xi h)|^2 \mathrm{d}x+\int_{\mathbb{T}^d}|(\omega\cdot\nabla_{\varphi}) (u+\xi h)|^2 \mathrm{d}x)\\
&\quad\quad-(\omega\cdot\nabla_{\varphi})u(\lambda\int_{\mathbb{T}^d}|\Delta u|^2 \mathrm{d}x+\int_{\mathbb{T}^d}|(\omega\cdot\nabla_{\varphi}) u|^2 \mathrm{d}x))\\
&=\Big(2\lambda\int_{\mathbb{T}^d}\Delta u \cdot\Delta h\mathrm{d}x+2\int_{\mathbb{T}^d}(\omega\cdot\nabla_{\varphi}) u\cdot(\omega\cdot\nabla_{\varphi}) h \mathrm{d}x\Big)(\omega\cdot\nabla_{\varphi})u \\
&\quad+\Big(\lambda\int_{\mathbb{T}^d}|\Delta u|^2 \mathrm{d}x+\int_{\mathbb{T}^d}|(\omega\cdot\nabla_{\varphi}) u|^2 \mathrm{d}x)(\omega\cdot\nabla_{\varphi}\Big)h.
\end{align*}

We are now in a position to check \eqref{E:nonlinearity1}. Let $a(\varphi):=A(\varphi)+B(\varphi),$ where
\begin{align*}
A(\varphi)=\lambda\int_{\mathbb{T}^d}|\Delta u|^2 \mathrm{d}x\quad B(\varphi)=\int_{\mathbb{T}^d}|(\omega\cdot\nabla_{\varphi}) u|^2 \mathrm{d}x.
\end{align*}
In Fourier bases $e^{\mathrm{i}k\cdot\varphi}$ and $e^{\mathrm{i}j\cdot x}$, we can derive
\begin{align*}
A(\varphi)(\omega\cdot\nabla_{\varphi})u(\varphi,x)&=\sum_{k\in\mathbb{Z}^\nu}\mathrm{i}(\omega\cdot k)A(\varphi){u}_k(x)e^{\mathrm{i}k\cdot\varphi}\\
&=\sum_{k,l\in\mathbb{Z}^\nu,j\in\mathbb{Z}^d}\mathrm{i}(\omega\cdot k){A}_{l-k}{u}_{k,j}e^{\mathrm{i}l\cdot\varphi}e^{\mathrm{i}j\cdot x},\\
B(\varphi)(\omega\cdot\nabla_{\varphi})u(\varphi,x)&=\sum_{k\in\mathbb{Z}^\nu}\mathrm{i}(\omega\cdot k)B(\varphi){u}_k(x)e^{\mathrm{i}k\cdot\varphi}\\
&=\sum_{k,l\in\mathbb{Z}^\nu,j\in\mathbb{Z}^d}\mathrm{i}(\omega\cdot k){B}_{l-k}{u}_{k,j}e^{\mathrm{i}l\cdot\varphi}e^{\mathrm{i}j\cdot x},
\end{align*}
which lead to
\begin{align}	\|A(\omega\cdot\nabla_{\varphi})u\|_{\rho,s}^2=\sum_{l\in\mathbb{Z}^\nu,j\in\mathbb{Z}^d}\Big|\sum_{k\in\mathbb{Z}^{\nu}}\mathrm{i}(\omega\cdot k){A}_{l-k}{u}_{k,j}\Big|^2\langle l,j\rangle^{2s}e^{2\rho(|l|+|j|)},\nonumber\\
\|B(\omega\cdot\nabla_{\varphi})u\|_{\rho,s}^2=\sum_{l\in\mathbb{Z}^\nu,j\in\mathbb{Z}^d}\Big|\sum_{k\in\mathbb{Z}^{\nu}}\mathrm{i}(\omega\cdot k){B}_{l-k}{u}_{k,j}\Big|^2\langle l,j\rangle^{2s}e^{2\rho(|l|+|j|)}.\nonumber
\end{align}
Since
\begin{align}
A(\varphi)=\lambda(2\pi)^d\sum_{k,l\in\mathbb{Z}^{\nu},j\in\mathbb{Z}^d}
|j|^4u_{k-l,j}u_{l,-j}e^{\mathrm{i}k\cdot\varphi},\nonumber
\end{align}
one has
\begin{align}	
\|A\|^2_{\varphi,\rho,s}=\lambda^{2}(2\pi)^{2d}\sum_{k\in\mathbb{Z}^{\nu}}\Big|\sum_{l\in\mathbb{Z}^{\nu},j\in\mathbb{Z}^d}
|j|^4u_{k-l,j}{u}_{l,-j}\Big|^2\langle k\rangle^{2s}e^{2\rho|k|}.\nonumber
\end{align}
Observe that for $s>{\nu}/{2}$,
\begin{align}
\sum_{l\in\mathbb{Z}^{\nu}}\frac{\langle k\rangle^{2s}}{\langle k-l\rangle^{2s}\langle l\rangle^{2s}}\leq2^{2s-1}\sum_{l\in\mathbb{Z}^{\nu}}\frac{1}{\langle l\rangle^{2s}}+2^{2s-1}\sum_{l\in\mathbb{Z}^{\nu}}\frac{1}{\langle k-l\rangle^{2s}}=:C^2_1(s).\nonumber
\end{align}
It follows from  the Cauchy inequality that
\begin{align}	
\sum_{j\in\mathbb{Z}^d}|j|^4|u_{k-l,j}||{u}_{l,-j}|\leq\Big(\sum_{j\in\mathbb{Z}^d}
|j|^8|{u}_{k-l,j}|^2\Big)^{\frac12}\Big(\sum_{j\in\mathbb{Z}^d}|{u}_{l,-j}|^2\Big)^{\frac12}.\nonumber
\end{align}
This shows that
\begin{align*}
\Big|\sum_{l\in\mathbb{Z}^{\nu},j\in\mathbb{Z}^d}|j|^4{u}_{k-l,j}{u}_{l,-j}\Big|^2\langle k\rangle^{2s}
&\leq\Big|\sum_{l\in\mathbb{Z}^{\nu}}\Big(\sum_{j\in\mathbb{Z}^d}|j|^8|{u}_{k-l,j}|^2\Big)^{\frac12}
\Big(\sum_{j\in\mathbb{Z}^d}|u_{l,-j}|^2\Big)^{\frac12}\langle k\rangle^{s}\Big|^2\\
&\leq C^2_1(s)\sum_{l\in\mathbb{Z}^{\nu}}\Big(\sum_{j\in\mathbb{Z}^d}|j|^8|{u}_{k-l,j}|^2|\langle k-l\rangle^{2s}\Big)\Big(\sum_{j\in\mathbb{Z}^d}|{u}_{l,-j}|^2\langle l\rangle^{2s}\Big),
\end{align*}
which then gives
\begin{align}\label{a-varphi}
\|A\|^2_{\varphi,\rho,s}
&\leq C^2_2(s)\sum_{k,l\in\mathbb{Z}^{\nu}}\Big(\sum_{j\in\mathbb{Z}^d}|j|^8|{u}_{k-l,j}|^2\langle k-l\rangle^{2s}\Big)
\Big(\sum_{j\in\mathbb{Z}^d}|{u}_{l,-j}|^2\langle l\rangle^{2s}\Big)e^{2\rho|k|}\nonumber\\
&\leq C^2_2(s)\sum_{l\in\mathbb{Z}^{\nu}}\Big(\sum_{j\in\mathbb{Z}^d}|u_{l,-j}|^2\langle l,j\rangle^{2s}e^{2\rho(|l|+|j|)}\Big)\Big(\sum_{k\in\mathbb{Z}^{\nu},j\in\mathbb{Z}^d}|u_{k-l,j}|^2\langle k-l,j\rangle^{2s+8}e^{2\rho(|k-l|+|j|)}\Big)\nonumber\\
&\leq C^2_2(s)\|u\|^2_{\rho,s+4}\|u\|^2_{\rho,s}.
\end{align}
Furthermore, by applying the Cauchy inequality, we can obtain that  for $s>{\nu}/{2}$,
\begin{align*}
\Big|\sum_{k\in\mathbb{Z}^{\nu}}\mathrm{i}(\omega\cdot k)A_{l-k}u_{k,j}\Big|^2\langle l,j\rangle^{2s}&=\Big|\sum_{k\in\mathbb{Z}^{\nu}}\mathrm{i}(\omega\cdot k)A_{l-k}u_{k,j}\langle l,j\rangle^{s}\Big|^2\\
&\leq C_1^2(s)\sum_{k\in\mathbb{Z}^\nu}|A_{l-k}|^2\langle l-k\rangle^{2s}|u_{k,j}|^2\langle k,j\rangle^{2(s+1)}.
\end{align*}
Consequently, we have
\begin{align}\label{omegaA}	
\|A(\omega\cdot\nabla_{\varphi})u\|_{\rho,s}^2
&\leq C^2_1(s)\sum_{l,k\in\mathbb{Z}^\nu,j\in\mathbb{Z}^d}|A_{l-k}|^2\langle l-k\rangle^{2s}|u_{k,j}|^2\langle
k,j\rangle^{2(s+1)}e^{2\rho(|l|+|j|)}\nonumber\\
&\leq C^2_1(s)\sum_{k\in\mathbb{Z}^\nu,j\in\mathbb{Z}^d}|u_{k,j}|^2\langle k,j\rangle^{2(s+1)}e^{2\rho(|k|+|j|)}(\sum_{l\in\mathbb{Z}^{\nu}}|A_{l-k}|^2\langle l-k\rangle^{2s}e^{2\rho|l-k|})\nonumber\\
&=C^2_1(s)\|A\|^{2}_{\varphi,\rho,s}\sum_{k\in\mathbb{Z}^\nu,j\in\mathbb{Z}^d}|u_{k,j}|^2\langle k,j\rangle^{2(s+1)}e^{2\rho(|k|+|j|)}\nonumber\\
&\leq C^2_3(s)\|u\|^2_{\rho,s+1}\|u\|^2_{\rho,s+4}\|u\|^2_{\rho,s}.
\end{align}
Proceeding a similar procedure as above, we have
\begin{align}\label{B-varphi}	
\|B\|^2_{\varphi,\rho,s}
&\leq C^2_4(s)\|u\|^2_{\rho,s+2}\|u\|^2_{\rho,s},\\
\|B(\omega\cdot\nabla_{\varphi})u\|_{\rho,s}^2&\leq C^2_5(s)\|u\|^2_{\rho,s+1}\|u\|^2_{\rho,s+2}\|u\|^2_{\rho,s}.\label{omegaB}
\end{align}
Combining \eqref{omegaA} and \eqref{omegaB}, we conclude that \eqref{E:nonlinearity1} holds.

A similar argument as above yields that
\begin{align*}
&\|(\omega\cdot\nabla_{\varphi})u(\lambda\int_{\mathbb{T}^d}\Delta u\cdot\Delta
h\mathrm{d}x+\int_{\mathbb{T}^d}(\omega\cdot\nabla_{\varphi}) u\cdot(\omega\cdot\nabla_{\varphi})h\mathrm{d}x)\|_{\rho,s}\leq C(s)\|u\|^2_{\rho,s+4}\|h\|_{\rho,s},\\
&\|(\omega\cdot\nabla_{\varphi})h(\lambda\int_{\mathbb{T}^d}|\Delta u|^2\mathrm{d}x+\int_{\mathbb{T}^d}|(\omega\cdot\nabla_{\varphi}) u|^2\mathrm{d}x)\|_{\rho,s}\leq C(s)\|u\|_{\rho,s+4}\|u\|_{\rho,s}\|h\|_{\rho,s+1}.
\end{align*}
Hence we obtain \eqref{E:nonlinearity2}.
	
In addition, note that
\begin{align*}
F(u+h)-F(u)-\mathrm{D}F(u)[h]
&=(\omega\cdot\nabla_{\varphi})u(\lambda\int_{\mathbb{T}^d}|\Delta h|^2\mathrm{d}x+\int_{\mathbb{T}^d}|(\omega\cdot\nabla_{\varphi})h|^2\mathrm{d}x)\\
&\quad+(\omega\cdot\nabla_{\varphi})h(\lambda\int_{\mathbb{T}^d}|\Delta h|^2\mathrm{d}x+\int_{\mathbb{T}^d}|(\omega\cdot\nabla_{\varphi})h|^2\mathrm{d}x\\
&\quad+2\lambda\int_{\mathbb{T}^d}\Delta u\cdot\Delta h\mathrm{d}x+2\int_{\mathbb{T}^d}(\omega\cdot\nabla_{\varphi})u
\cdot(\omega\cdot\nabla_{\varphi})h\mathrm{d}x).\nonumber
\end{align*}
Similarly, we can deduce \eqref{E:nonlinearity3}.
	
The proof is complete.
\end{proof}

Let $u$ be a real analytic function on $\mathbb{T}^{m}_\rho$ with $\int_{\mathbb{T}^m}u(z)\mathrm{d}z=0$. The max norm of $u$ is defined by
\begin{align}
|u|_{z,\rho,s}:=\sum_{l\in\mathbb{N},|l|\leq s}\max_{z\in\mathbb{T}^m_\rho}|\mathrm{D}^l u(z)|.\nonumber
\end{align}
For $l\in\mathbb{N}$, the max norm of $\mathrm{D}^{l}u$ on $\mathbb{T}^{m}_\rho$ is
\begin{align}\label{N:max}	|\mathrm{D}^{l}u|_{z,\rho}=\sum_{\alpha\in\mathbb{Z}^m,|\alpha|=l}|\mathrm{D}^{\alpha}u|_{z,\rho}=\sum_{\alpha\in\mathbb{Z}^m,|\alpha|=l}\max_{z\in\mathbb{T}^m_{\rho}}|\mathrm{D}^{\alpha}u(z)|.
\end{align}
Moreover, we fix $s_1>{m}/{2}$ in the Appendix.

In the following lemma, we are devoted to discussing the relationship between the Sobolev norm $\|\cdot\|_{z,\rho,s}$ and  the max norm $|\cdot|_{z,\rho,s}$ (cf. \cite{Baldi2014KAM,sun2018quasi}).

\begin{lemm}
Let $u$ be a real analytic function on $\mathbb{T}^{m}_\rho$ with $\int_{\mathbb{T}^m}u(z)\mathrm{d}z=0$. For $\rho>0,s\geq 0$,
 there exists a constant ${C}>0$ such that for $u\in H^{\rho,s+s_1}_{z,0}$, the following inequalities hold:
\begin{align}\label{2}
|u|_{z,\rho,s}\leq {C}\|u\|_{z,\rho,s+s_1},\quad \|u\|_{z,\rho,s}\leq C|u|_{z,\rho,s+s_1}.
\end{align}
\end{lemm}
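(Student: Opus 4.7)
The plan is to work entirely in the Fourier basis. Write $u(\varphi)=\sum_{k\in\mathbb{Z}^n}u_k e^{\mathrm{i}k\cdot\varphi}$ with $u_0=0$, so that the Sobolev norm is the weighted $\ell^2$-norm of $\{u_k\}$ and each $D^\alpha u$ has Fourier coefficients $(\mathrm{i}k)^\alpha u_k$. The two key devices are (a) Cauchy--Schwarz with the summable tail $\sum_k\langle k\rangle^{-2s_1}<\infty$ coming from $s_1>n/2$, and (b) the shift-of-contour trick, exploiting that $u$ is analytic on $\mathbb{T}^n_\rho$ so that $e^{\mathrm{i}k\cdot\varphi}$ can be evaluated at $\varphi+\mathrm{i}\sigma$ with $|\sigma_j|\leq\rho$, producing the weight $e^{-\rho|k|}$ on Fourier coefficients.

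For the first inequality $|u|_{\varphi,\rho,s}\leq C\|u\|_{\varphi,\rho,s+s_1}$, I would fix a multi-index $\alpha$ with $|\alpha|=l\leq s$ and estimate, for any $\varphi\in\mathbb{T}^n_\rho$,
\begin{align*}
|D^{\alpha}u(\varphi)|\leq\sum_{k\in\mathbb{Z}^n}|k|^{l}|u_k|e^{\rho|k|}
=\sum_{k}\bigl(|u_k|e^{\rho|k|}\langle k\rangle^{l+s_1}\bigr)\cdot\langle k\rangle^{-s_1}.
\end{align*}
Applying Cauchy--Schwarz and using $\sum_k\langle k\rangle^{-2s_1}<\infty$ gives
$\max_{\varphi\in\mathbb{T}^n_\rho}|D^\alpha u(\varphi)|\leq C\|u\|_{\varphi,\rho,l+s_1}$.
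Summing over $|\alpha|=l$ and $l\leq s$ via the definition \eqref{N:max} yields the required bound.

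For the reverse inequality $\|u\|_{\varphi,\rho,s}\leq C|u|_{\varphi,\rho,s+s_1}$, I would start from the Fourier integral representation $u_k=(2\pi)^{-n}\int_{\mathbb{T}^n}u(\varphi)e^{-\mathrm{i}k\cdot\varphi}\mathrm{d}\varphi$, integrate by parts $2m$ times against the operator $(I-\Delta_\varphi)^m$ to produce the algebraic weight $(1+|k|^2)^m$, and then shift the torus of integration to $\mathbb{T}^n+\mathrm{i}\sigma$ with $\sigma=\rho\,\mathrm{sgn}(k)$ to pick up the analytic weight $e^{-\rho|k|}$. This yields
\begin{align*}
(1+|k|^2)^m|u_k|\leq e^{-\rho|k|}\max_{\varphi\in\mathbb{T}^n_\rho}\bigl|(I-\Delta_\varphi)^m u(\varphi)\bigr|\leq C\,e^{-\rho|k|}|u|_{\varphi,\rho,2m}.
\end{align*}
Choosing $2m$ comparable to $s+s_1$ gives $|u_k|\leq C\langle k\rangle^{-s-s_1}e^{-\rho|k|}|u|_{\varphi,\rho,s+s_1}$, and squaring, weighting by $\langle k\rangle^{2s}e^{2\rho|k|}$, and summing produces $\|u\|_{\varphi,\rho,s}^2\leq C|u|_{\varphi,\rho,s+s_1}^2\sum_k\langle k\rangle^{-2s_1}$, which is finite.

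The argument is standard and I do not anticipate a genuine obstacle; the only minor care needed is to ensure the shift of contour is legitimate on $\mathbb{T}^n_\rho$ (it is, by Cauchy's theorem applied coordinate-by-coordinate, since $u$ is $(2\pi\mathbb{Z})^n$-periodic and analytic on the strip), and to absorb the combinatorial cost of converting $\langle k\rangle^s$ into a sum of monomial weights $|k^\alpha|$ when matching powers to the definition of $|\cdot|_{\varphi,\rho,s}$. Both constants depend only on $n$, $s$, and $s_1$, which is what the statement requires.
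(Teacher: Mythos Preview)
Your proof is correct and follows essentially the same route as the paper's: both halves rely on Cauchy--Schwarz against the summable tail $\sum_k\langle k\rangle^{-2s_1}$, and the reverse inequality uses the exponential decay $|u_k|\leq |u|_{\varphi,\rho}e^{-\rho|k|}$ of Fourier coefficients of analytic functions on a strip (your contour shift) applied to derivatives of $u$. The only cosmetic difference is that the paper gains the algebraic weight by applying the decay bound to each $D^\alpha u$ with $|\alpha|=s+s_1$ rather than to $(I-\Delta_\varphi)^m u$; this avoids the small bookkeeping issue in your sketch of matching the even integer $2m$ to $s+s_1$, which as written would require $s+s_1$ even or a minor adjustment (e.g.\ replacing $(I-\Delta)^m$ by the family $D^\alpha$, $|\alpha|=s+s_1$, exactly as you anticipate in your final remark about converting $\langle k\rangle^s$ into monomial weights).
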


\begin{proof}
Since  $\int_{\mathbb{T}^m}u(z)\mathrm{d}z=0$, which implies $u_0=0$, we can obtain
\begin{align*}
|u|_{z,\rho,s}&\leq C_1\sum_{k\in\mathbb{Z}^m}|u_k|\langle k\rangle^{s}e^{\rho|k|}\\
&\leq C_1(\sum_{k\in\mathbb{Z}^m}|u_k|^2\langle k\rangle^{2(s+s_1)}e^{2\rho|k|})^{\frac{1}{2}}(\sum_{k\in\mathbb{Z}^m}\frac{1}{\langle k\rangle^{2s_1}})^{\frac12}\\
&\leq C\|u\|_{z,\rho,s+s_1}.
\end{align*}
Therefore, the first inequality in \eqref{2} holds.

It remains to verify the second inequality in \eqref{2}. Since $u$ is analytic on $\mathbb{T}^m_{\rho}$, the Fourier coefficients $u_k$ have the following decay property:
\begin{align}
|u_k|\leq|u|_{z,\rho}e^{-\rho|k|}.\nonumber
\end{align}
Moreover, $\mathrm{D}^{\alpha}u$ is also analytic on $\mathbb{T}^m_{\rho}$ with
\begin{align}
(\mathrm{D}^\alpha u)_{k}=u_k(\mathrm{i}k)^{\alpha},\quad (\mathrm{i}k)^{\alpha}=(\mathrm{i}k_1)^{\alpha_1}\cdots(\mathrm{i}k_m)^{\alpha_m}.\nonumber
\end{align}
As a consequence,
\begin{align}
|(\mathrm{D}^\alpha u)_{k}|=|u_k||(\mathrm{i}k)^{\alpha}|\leq |\mathrm{D}^\alpha u|_{z,\rho}e^{-\rho|k|}.\nonumber
\end{align}
According to \eqref{N:max}, if $|\alpha|=s+s_1$, then
\begin{align*}		|u_k||\mathrm{i}k|^{s+s_1}=&\sum_{|\alpha|=s+s_1}|u_k||(\mathrm{i}k)^{\alpha}|=\sum_{|\alpha|=s+s_1}|(\mathrm{D}^\alpha u)_{k}|\\
\leq&\sum_{|\alpha|=s+s_1}|\mathrm{D}^\alpha u|_{z,\rho}e^{-\rho|k|}=|\mathrm{D}^{s+s_1} u|_{z,\rho}e^{-\rho|k|}.
\end{align*}
Hence,
\begin{align*}
\|u\|^{2}_{z,\rho,s}&=\sum_{k\in\mathbb{Z}^m}|u_k|^2e^{2\rho|k|}\langle k\rangle^{2s}\\
&=\sum_{k\in\mathbb{Z}^m\backslash\{0\}}|u_k|^2e^{2\rho|k|}| k|^{2(s+s_1)}|k|^{-2s_1}\\
&\leq|\mathrm{D}^{s+s_1} u|^2_{z,\rho}\sum_{k\in\mathbb{Z}^m\backslash\{0\}}|k|^{-2s_1}\\
&\leq C| u|^2_{z,\rho,s+s_1},
\end{align*}
which completes the proof.
\end{proof}

\begin{lemm}
Let $u$ be a real analytic function on $\mathbb{T}^{m}_\rho$ with $\int_{\mathbb{T}^m}u(z)\mathrm{d}z=0$. For $0\leq\breve{\rho}<\rho,\sigma>0,s\geq 0$, if $u\in H^{\rho,s}_{z,0}$, then
\begin{align}\label{1}
&\|u\|_{\rho-\breve{\rho},s+\sigma}\leq\big(\frac{\sigma}{e}\big)^{\sigma}\frac{1}{\breve{\rho}^{\sigma}}\|u\|_{\rho,s}.
\end{align}
\end{lemm}
\begin{proof}
Since $\int_{\mathbb{T}^m}u(z)\mathrm{d}z=0$ and
\begin{align*}
\frac{\zeta^{\sigma}}{e^{\breve{\rho}\zeta}}\leq \big(\frac{\sigma}{e}\big)^{\sigma}\frac{1}{\breve{\rho}^{\sigma}},\quad \forall\zeta\geq0,
\end{align*}
we obtain
\begin{align*}
\|u\|^2_{\rho-\breve{\rho},s+\sigma}=&\sum_{k\in\mathbb{Z}^m} |u_{k}|^2\langle k\rangle^{2(s+\sigma)} e^{2(\rho-\breve{\rho})|k|}\\
=&\sum_{k\in\mathbb{Z}^m,k\neq0} |u_{k}|^2|k|^{2s} e^{2\rho|k|}\frac{|k|^{2\sigma}}{e^{2\breve{\rho} |k|}}\\
\leq &\big(\frac{\sigma}{e}\big)^{2\sigma}\frac{1}{\breve{\rho}^{2\sigma}}\|u\|^2_{\rho,s}.
\end{align*}
This ends the proof of the lemma.
\end{proof}


\end{document}